\newtheorem{theorem}{Theorem}[section]
\newtheorem{proposition}[theorem]{Proposition}
\theoremstyle{definition}
\newtheorem{example}[theorem]{Example}
\theoremstyle{remark}
\newtheorem{remark}[theorem]{Remark}
\numberwithin{equation}{section}
\newcommand{\mb}[1]{\ensuremath{\mathbf{#1}}}
\newcommand{\ip}[2]{\ensuremath{\langle #1,#2\rangle}}
\newcommand{\CC}{\mathbb C}
\newcommand{\RR}{\mathbb R}
\newcommand{\ii}{\mathfrak i}
\newcommand{\cH}{\mathcal H}
\newcommand{\cL}{\mathcal L}
\newcommand{\cK}{\mathcal K}
\newcommand{\cB}{\mathcal B}
\newcommand{\cG}{\mathcal G}
\newcommand{\Ran}{\mathrm{Range}}
\newcommand{\Nul}{\mathrm{Nullspace}}
\newcommand{\argmin}{\mathrm{arg\,min}}
\newcommand{\Dom}{\mathrm{Dom}}
\newcommand{\Spec}{\mathrm{Spec}}
\newcommand{\Res}{\mathrm{Res}}
\newcommand{\dist}{\mathrm{dist}}
\newcommand{\curl}{\mathrm{curl}}
\begin{document}

\title[Eigenvectors near a subspace]{Detecting eigenvectors of an operator
	that are near a specified subspace}


\author[D. Darrow]{David Darrow} 
\address{David Darrow, Department of Mathematics, Massachussetts
	Institute of Technology, Cambridge, MA 02139}
\curraddr{}
\email{ddarrow@mit.edu}
\thanks{}

\author[J. Ovall]{Jeffrey S. Ovall} 
\address{Jeffrey S. Ovall,
	Fariborz Maseeh Department of Mathematics and Statistics, Portland
	State University, Portland, OR 97201}
\curraddr{}\email{jovall@pdx.edu}
\thanks{}

\subjclass[2010]{Primary }

\date{\today}

\dedicatory{}

\begin{abstract}
  In modeling quantum systems or wave phenomena, one is often
  interested in identifying eigenstates that approximately carry a
  specified property; scattering states approximately align with
  incoming and outgoing traveling waves, for instance, and electron
  states in molecules often approximately align with superpositions of
  simple atomic orbitals. These examples---and many others---can be
  formulated as the following eigenproblem: given a self-adjoint
  operator $\cL$ on a Hilbert space $\cH$ and a closed subspace
  $W\subset\cH$, can we identify all eigenvectors of $\cL$ that lie
  approximately in $W$?
	
  We develop an approach to answer this question efficiently, with a
  user-defined tolerance and range of eigenvalues, building upon
  recent work for spatial localization in diffusion
  operators~\cite{Ovall2023}. Namely, by perturbing $\cL$
  appropriately along the subspace $W$, we collect the eigenvectors
  near $W$ into a well-isolated region of the spectrum, which can then
  be explored using any of several existing methods. We prove key
  bounds on perturbations of both eigenvalues and eigenvectors,
  showing that our algorithm correctly identifies desired eigenpairs,
  and we support our results with several numerical examples.
\end{abstract}

\maketitle


\section{Introduction}\label{Introduction}
Understanding the eigenmodes of a linear operator is often key to understanding the physics it represents. In quantum mechanics, eigenmodes of the Hamiltonian correspond to the system's energy levels---for instance, the eigenproblem for an isolated Coulomb potential gives us the familiar breakdown of atomic orbitals into $s$, $p$, $d$, and $f$ types, along with their corresponding geometries. In fluid mechanics, eigenmodes (of positive eigenvalue) correspond to system instabilities, and their structure tells us the leading-order geometry of instability or turbulence.
In many applications, one is particularly interested in eigenstates that carry a specified property. Scattering problems---for instance, in quantum mechanics---involve searching for states that converge to radially-outgoing waves as they travel. In studying disordered materials, one may wish to identify the strongly-localized states created through \emph{Anderson localization}~\cite{Anderson1958,Lagendijk2009}. To understand the effect of a single defect in a crystal, however, one may instead search for eigenstates with only a localized perturbation away from a known, nonzero solution. These examples---and many others---can be formulated as the following eigenproblem: 
\begin{equation}
	\tag{T}\label{KeyTask}
	\parbox{\dimexpr\linewidth-4em}{%
		\strut
		Given a Hilbert space $\cH$, a self-adjoint operator $\cL$ on $\cH$ with compact resolvent, a closed subspace $W\subset\cH$, an interval $[a,b]$, and a tolerance $\delta^*>0$, find all eigenpairs $(\lambda,\psi)$ of $\cL$ such that $\lambda\in[a,b]$ and $\dist(\psi,W)/\|\psi\| \leq \delta^*$, or determine that none exist.%
		\strut
	}
\end{equation}

In this paper, we develop an efficient approach for determining such eigenpairs, extending a previous work by Ovall and Reid~\cite{Ovall2023}. Ovall and Reid tackled this question in the particular case of spatial localization in $\RR^d$; they hypothesized that $\cH = L^2(\Omega)$ for a bounded, connected open set $\Omega\subset\RR^d$, that $\cL$ is an elliptic operator on $\Omega$ with compact resolvent, and that $W = L^2(K)\subset L^2(\Omega)$ for a closed subdomain $K\subset\Omega$. Even in this restricted setting, their work carries important applications. For instance, much work has gone into understanding {Anderson localization}, wherein disorder can give rise to spatially-localized electron states in an otherwise uniform medium.
We mention some notable contributions in this direction: Filoche, Mayboroda and co-authors make extensive use of the \emph{landscape function}, the solution of the source problem $\cL u = 1$, to identify likely regions of localization and provide estimates of the associated groundstate energies and eigenvectors~\cite{Filoche2012,Arnold2016,Arnold2019,Arnold2019a}; Steinberger, Lu and Murphey provide an alternative to the landscape function that plays a similar role~\cite{Lu2018,Lu2021,Steinerberger2021}; Altmann, Henning and Peterseim apply a two-phase iterative approach to approximate
localized eigenstates~\cite{Altmann2019,Altmann2020}.  These approaches are well-suited to scenarios in which there are many localized eigenvectors
near the bottom of the spectrum, and they offer no \emph{a priori} control over how strongly localized a given eigenstate is (i.e., the tolerance $\delta^*$).
In contrast, the approach of Ovall and Reid is not constrained to the lower portion of the spectrum, can be used to detect localization in regions not predicted by the aforementioned approaches, and provides a natural mechanism for controlling the strength of localization.

In short, fixing a sufficiently small $s>0$, Ovall and Reid replace the given operator $\cL$ with a perturbed operator
\[\cL(s) = \cL + \ii s\,\chi_K\quad,\quad \Dom(\cL(s)) = \Dom(\cL)~,\]
with $\chi_K$ the characteristic function on $K\subset\Omega$, and they search for eigenpairs $(\mu,\phi)$ with $\Im\mu\approx s$. Morally, each such eigenmode should correspond to an eigenmode $\psi$ of $\cL$ highly localized within $K$, for which $\chi_K\psi \sim \psi$.  Indeed, they prove that, if $(\lambda,\psi)$ is an eigenpair of $\cL$ with $\lambda\in[a,b]$ and $\dist(\psi,K)/\|\psi\| < \delta^*$, there is necessarily an eigenpair $(\mu,\phi)$ of $\cL(s)$ with $\mu$ within a distance $s\delta^*$ of $[a,b] + \ii s$, provided that the parameter $s$ is not too large. Moreover, $(\Re\mu,\Re\phi)$ or $(\Re\mu,\phi)$ provides a good approximation of $(\lambda,\psi)$ that comes with built-in identities for the residual errors $\|(\cL-\Re\mu)\Re\phi\|$ and $\|(\cL-\Re\mu)\phi\|$.  The associated algorithm provides a powerful approach to address (\ref{KeyTask}) in their case of interest---particularly when one does not know \emph{a priori} how many eigenvalues are contained in $[a,b]$, and does not wish to use the ``brute force'' approach of computing them all and then testing the eigenvectors to see which satisfy the localization constraint.

In the present work, we extend their results to a more general environment, addressing (\ref{KeyTask}) for a general Hilbert space $\cH$, any self-adjoint operator $\cL$ with compact resolvent, and any closed subspace $W\subset\cH$. This generalization opens the door to several important applications: scattering problems and localized perturbations, as discussed above, but also localized eigenmodes of graph-theoretic operators, eigenmodes that obey approximate symmetries, and eigenmodes localized in a given band of linear or angular momenta.

Our theoretical results are proved in Section~\ref{Theory}, extending the work of Ovall and Reid to our more general setting, and a corresponding algorithm template is proposed in Section~\ref{Algorithm}. Section~\ref{Applications} is dedicated to exploring various applications of our work, establishing the mathematical objects appropriate to each setting and how to efficiently implement each. Accompanying these applications are several numerical examples, demonstrating the effectiveness of our algorithm in handling a variety of practical scientific problems.

\section{Theoretical Results}\label{Theory}
Let $\cK$ be a real Hilbert space, with inner-product
$\ip{\cdot}{\cdot}$ and norm, $\|\cdot\|$, and let $\cH$ denote its
complexification, which we identify with $\cK\oplus\ii\cK$.  Vectors
in $\cK$ are called real, and for each $v\in\cH$ we have the unique
decomposition into its real and imaginary parts, $v=\Re v+\ii\,\Im v$,
where $\Re v,\, \Im v\in\cK$. {Notably, any Hilbert space can be equipped with a (non-unique) decomposition $\cH = \cK\oplus\ii\cK$, starting from an orthonormal basis, so our construction does not restrict the choice of $\cH$.}

Now, let $\cL:\Dom(\cL)\to \cH$, $\Dom(\cL)\subset\cH$, be a self-adjoint
operator with compact resolvent. Among other properties, this
implies that $\cL$ has a discrete spectrum of real eigenvalues, and
that each eigenspace of $\cL$ is finite-dimensional.

\begin{example}[Reaction-Diffusion]\label{ReactionDiffusion}
	We draw attention to the example discussed
        in~\cite{Ovall2023}, with $\cH = L^2(\Omega;\CC)$ (so $\cK=L^2(\Omega;\RR)$) for a bounded open set $\Omega\subset\mathbb{R}^d$, and $\cL$ an elliptic operator on $\Omega$ of the form
	\[\cL v = -\nabla\cdot(A\nabla v) + cv~.\]
        We take $\Dom(\cL)=\{v\in H^1_0(\Omega;\CC):\, \cL v\in \cH\}$.
        Typical assumptions that ensure that $\cL$ is self-adjoint and
        has compact resolvent are that $A\in[L^\infty(\Omega;\RR)]^{d\times d}$ is
        postive definite with eigenvalues bounded below (almost
        everywhere) by a positive constant $\sigma>0$, and that 
        $c\in L^\infty(\Omega)$ is bounded below.
\end{example}

\begin{example}[Magnetic Schr\"odinger]\label{MagneticSchrodinger}
	Another example of central interest in quantum mechanics uses
        the same $\cH$ as above, and defines the operator
	\[\cL v = (-\ii\nabla-\mb{A})\cdot(-\ii\nabla-\mb{A})v+ c
          v~,\] together with the same domain used in the previous
        example.  This differential operator is related to the motion
        of a charged particle in a magnetic field.  With
        $d=2,3$, the {(real)} vector field $\mb{A}$ is
        called the magnetic potential, and $\curl(\mb{A})$
        represents the underlying magnetic field, up to some
        scaling.  We take the same assumptions on the scalar potential
        $c$ as in Example~\ref{ReactionDiffusion}.  Although we might
        relax the regularity requirements on $\mb{A}$ and still obtain
        a self-adjoint operator, it is sufficient to assume that $\mb{A}$ is $C^1$
        in each of its components.  When $c=0$, $\cL$ is referred to
        as the magnetic Laplacian, which has been studied extensively
        using the techniques of semi-classical analysis,
        cf.~\cite{Fournais2006a,Fournais2019,Fournais2023}.
        Asymptotics and numerical approximation of eigenvectors lower
        in the spectrum are considered in~\cite{BonnaillieNoel2006,
          Bonnaillie-Noel2007, BonnaillieNoel2016,
          BonnaillieNoel2016a}, and an exploration of how $\mb{A}$
        (and $c$)
        affects the spatial localization of such eigenvectors is taken
        up in~\cite{OvallArXiv2023,Poggi2024,Hoskins2024}.
\end{example}

For any operator $\cB:\Dom(\cB)\to\cH$, $\Dom(\cB)\subset \cH$, we denote its spectrum and resolvent
set by $\Spec(\cB)$ and $\Res(\cB)$, respectively.
For $\lambda\in \Spec(\cB)$, we define
$E(\lambda,\cB)=\Nul(\lambda-\cB)$ to be the corresponding
eigenspace.  For $\Lambda\subset\Spec(\cB)$, we denote the
corresponding invariant subspace by
$E(\Lambda,\cB)=\bigoplus\{E(\lambda,\cB):\,\lambda\in \Lambda\}$.
We say that $\cB$ is a \textit{real operator} when $\Re(\cB v)=\cB(\Re
v)$ for al $v\in\cH$.  This is equivalent to saying that $\cB v$ is
real whenever $v$ is real.
We note that the reaction-diffusion operators in Example~\ref{ReactionDiffusion} are
real operators, and the magnetic Schr\"odinger operators in
Example~\ref{MagneticSchrodinger} are not, unless $\mb{A}=\mb{0}$.

Let $W$ be a closed subspace of $\cH$, and let $Q:\cH\to\cH$ be the
$\cH$-orthogonal projector onto $W$, so $W=\Ran(Q)$.  
Given $v\in \cH$, $v\neq 0$, we define two complementary
measures of closeness to $W$:
\begin{align}\label{DeltaTau}
\tau(v)=\frac{\|Qv\|}{\|v\|}\quad,\quad \delta(v)=\frac{\|(I-Q)v\|}{\|v\|}~,
\end{align}
where we use $\|\cdot\|$ here and below to denote the norm of $\cH$. It is clear that $\tau$ and $\delta$ are scale-invariant, or
$\tau(cv)=\tau(v)$ and $\delta(cv)=\delta(v)$ for all $c\in\CC$, and
that $\tau^2(v)+\delta^2(v)=1$. {Furthermore, since $\|(I-Q)v\| = \dist(v,W)$, the normalized distance used in (\ref{KeyTask}) is exactly $\delta(v)$.}

\begin{proposition}\label{ProjectorIsReal}
  Let $Q$ be as defined above.  Then $Q$ is a real operator.
\end{proposition}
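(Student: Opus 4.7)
The plan is to verify the equivalent characterization of ``real operator'' noted just before the statement: it suffices to prove $Qv\in\cK$ whenever $v\in\cK$. For this to be true for a general closed subspace $W$, I read the hypothesis as including that $W$ is \emph{real} with respect to the decomposition $\cH=\cK\oplus\ii\cK$, i.e.\ invariant under the conjugation $v\mapsto\Re v-\ii\,\Im v$; equivalently, $W=W_\cK\oplus\ii W_\cK$ with $W_\cK:=W\cap\cK$ a closed subspace of $\cK$. Without this, even a one-dimensional complex line (say $\mathrm{span}_\CC\{1+\ii\}\subset\CC=\RR\oplus\ii\RR$) need not have a real orthogonal projector, so the conclusion genuinely relies on such compatibility.

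Under that reading, I would argue directly from the defining orthogonality property of the projection. Fix $v\in\cK$ and decompose $Qv=u+\ii w$ with $u,w\in\cK$. Since $W$ is real, both $u$ and $w$ lie in $W_\cK\subset W$. The identity $\ip{v-Qv}{x}=0$ for all $x\in W$ can then be tested against the real vectors $x\in W_\cK$, where the $\cH$-inner product restricts to the $\cK$-inner product and hence is real-valued; this yields
\[
0 \;=\; \ip{(v-u)-\ii w}{x} \;=\; \ip{v-u}{x} \,-\, \ii\,\ip{w}{x}\qquad\text{for every }x\in W_\cK.
\]
Equating imaginary parts gives $\ip{w}{x}=0$ for all $x\in W_\cK$, and the choice $x=w\in W_\cK$ forces $w=0$. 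Therefore $Qv=u\in\cK$, which is the desired reality of $Q$.

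The main substance of the proof really lies in pinning down the correct reality hypothesis on $W$ relative to the chosen $\cK$; once that is in place, the argument is a short manipulation of how the complex inner product splits across $\cK\oplus\ii\cK$, and I do not anticipate any technical obstacle. Equivalently, one could package the same idea as follows: pick an orthonormal basis of $W_\cK$ as a real Hilbert space, verify it is also an orthonormal basis of $W$ as a complex subspace of $\cH$, and then observe that the series formula $Qv=\sum_i\ip{v}{e_i}e_i$ takes real coefficients on real inputs.
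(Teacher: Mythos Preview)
Your argument is correct, and you are right to flag the missing hypothesis: without assuming that $W$ is conjugation-invariant (equivalently, $\Re w\in W$ for every $w\in W$), the conclusion fails, as your one-line counterexample shows. The paper's own proof relies on exactly this unstated assumption. It argues via the metric characterization of $Q$: for real $v$ and $w\in W$, one has $\|v-w\|^2=\|v-\Re w\|^2+\|\Im w\|^2\geq\|v-\Re w\|^2$, so the closest point $Qv$ must have $\Im(Qv)=0$. But that inference only works if $\Re(Qv)$ is itself a competitor in the infimum over $W$, i.e.\ if $\Re w\in W$ whenever $w\in W$---precisely the hypothesis you supply.

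The genuine methodological difference is that the paper uses the \emph{closest-point} description of $Q$ and a Pythagorean splitting of $\|v-w\|^2$, whereas you use the \emph{orthogonality} description and test $\ip{v-Qv}{x}=0$ against real $x\in W_\cK$ to kill the imaginary part. Both are short and essentially equivalent once the reality of $W$ is on the table; the paper's route is perhaps a shade slicker (one inequality rather than a basis or test-vector argument), while yours makes the role of the hypothesis $W_\cK=W\cap\cK$ more transparent. Your alternative ``orthonormal basis of $W_\cK$'' packaging is also fine and amounts to the same thing.
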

\begin{proof}
  In complex
  vector spaces of the type we consider, we have $\|v\|^2=\|\Re
  v\|^2+\|\Im v\|^2$ for all $v\in\cH$.  Suppose that $v$ is real, and
  let $w\in W$.  We have $\|v-w\|^2=\|v-\Re w\|^2+\|\Im w\|^2$, so
  $\|v-w\|\geq \|v-\Re w\|$, with equality iff $\Im w = 0$.  Since
  $\|v-Qv\|=\inf\{\|v-w\|:\,w\in W\}$, it follows that $Qv$ is real.
\end{proof}

Given $s\geq0$, we define $\cL(s)$ by
\begin{align*}
  \cL(s)\,v=\cL v+\ii\,s\,Qv\quad,\quad \Dom(\cL(s))=\Dom(\cL)~.
\end{align*}
In the terminology of Kato, $\cL(s)$ is a holomorphic family of Type
(A).  In fact, $\cL(s)$
may be viewed as a subfamily of the self-adjoint holomorphic family of
type (A), $\cL(z)=\cL+\ii\,z\,Q$ for $z\in\CC$, cf. Chapter VII, Sections 2 and 3 of~\cite{Kato1995}.

\begin{theorem}\label{KeyTheorem}
For any eigenspace $E(\lambda,\cL)$, there is an orthonormal basis $\{\psi_1,\ldots,\psi_m\}$ such that we can identify a family of
eigenpairs $(\mu_i(s),\phi_i(s))$ of $\cL(s)$ for which $\mu_i(s)\to\lambda$ and
$\phi_i(s)\to\psi_i$ as $s\to 0$.  For such a family, we have
\begin{align}\label{LimitingEigenvalue}
  \lim_{s\to 0}\frac{\lambda+\ii\,s-\mu_i(s)}{\ii\,s}=\delta^2(\psi_i)~.
\end{align}
It follows that, for $s$ sufficiently small,
\[\dist(\lambda+\ii\,s,\Spec(\cL(s)))\leq \,\min_{1\leq i\leq m}\delta(\psi_i) \leq s\,D~,\]
where $D = \max_{\psi\in E(\lambda,\cL)}\delta(\psi)$. In particular, if $(\lambda,\psi)$ is a simple eigenpair of $\cL$, then for $s$ sufficiently small,
	\[\dist(\lambda + \ii\,s,\Spec(\cL(s)))\leq s\,\delta(\psi).\]
\end{theorem}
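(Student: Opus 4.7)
The plan is to invoke Kato's perturbation theory for the holomorphic family of type (A) $s\mapsto\cL(s)=\cL+\ii sQ$, together with a first-order Rayleigh--Schr\"odinger expansion at the isolated eigenvalue $\lambda$ of finite multiplicity $m=\dim E(\lambda,\cL)$. First, I would fix a small closed contour $\Gamma\subset\CC$ enclosing $\lambda$ and no other point of $\Spec(\cL)$. By Kato (Ch.~VII, §1 of~\cite{Kato1995}), for $|s|$ sufficiently small $\Gamma\subset\Res(\cL(s))$ and the Riesz projection
\[ P(s)=-\frac{1}{2\pi\ii}\oint_\Gamma(\cL(s)-\zeta)^{-1}\,d\zeta \]
is analytic in $s$, with $P(0)=P$ (the orthogonal projector onto $E(\lambda,\cL)$) and $\rank P(s)=m$. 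Hence the portion of $\Spec(\cL(s))$ inside $\Gamma$ consists of $m$ eigenvalues (with algebraic multiplicity) that collapse to $\lambda$ as $s\to 0$.

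Next, using Kato's transformation function $U(s)$---analytic, invertible, with $U(s)P=P(s)U(s)$ and $U(0)=I$---I would represent $\cL(s)|_{\Ran P(s)}$ in a fixed basis of $E(\lambda,\cL)$ as an $m\times m$ matrix $M(s)$, analytic in $s$. A direct computation using $P\cL P=\lambda P$ and $U(s)=I+sU_1+O(s^2)$ gives $M(0)=\lambda I_m$ and $M'(0)_{jk}=\ii\langle\psi_j,Q\psi_k\rangle$ for any basis $\{\psi_j\}$ of $E(\lambda,\cL)$. The decisive step is the choice of basis: since $PQP$ is self-adjoint on the finite-dimensional complex space $E(\lambda,\cL)$, it admits an orthonormal eigenbasis $\{\psi_1,\ldots,\psi_m\}$ with eigenvalues $\langle\psi_j,Q\psi_j\rangle=\|Q\psi_j\|^2=\tau^2(\psi_j)$. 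With this choice, $M'(0)=\ii\cdot\mathrm{diag}(\tau^2(\psi_1),\ldots,\tau^2(\psi_m))$, and the eigenvalue/eigenvector branches of $M(s)$ admit the expansion
\[ \mu_j(s)=\lambda+\ii s\,\tau^2(\psi_j)+o(s),\qquad \phi_j(s)\to\psi_j\quad\text{as }s\to 0. \]

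From here the two claims follow quickly. Substituting $\tau^2=1-\delta^2$ yields $\lambda+\ii s-\mu_j(s)=\ii s\,\delta^2(\psi_j)+o(s)$, so dividing by $\ii s$ and letting $s\to 0$ gives (\ref{LimitingEigenvalue}). For the distance bound, $|\lambda+\ii s-\mu_j(s)|=s\,\delta^2(\psi_j)+o(s)\leq s\,\delta(\psi_j)$ once $s$ is small (using $\delta^2\leq\delta$ on $[0,1]$), with the boundary cases $\delta(\psi_j)\in\{0,1\}$ handled exactly: if $\delta(\psi_j)=0$ then $\psi_j\in W$ forces $\cL(s)\psi_j=(\lambda+\ii s)\psi_j$, and if $\delta(\psi_j)=1$ then $Q\psi_j=0$ forces $\cL(s)\psi_j=\lambda\psi_j$. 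Minimizing over $j$ and noting $\min_j\delta(\psi_j)\leq\max_{\psi\in E(\lambda,\cL)}\delta(\psi)=D$ delivers the stated chain of inequalities; the simple case $m=1$ is just the single-branch specialization.

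I expect the main technical subtlety to be the degenerate case in which several $\tau^2(\psi_j)$ coincide: the eigenvalue branches of $M(s)$ may then fail to be jointly analytic at $s=0$ (appearing as Puiseux-type branches), and within each degenerate cluster the eigenvector limits require a refined choice of basis, obtained by iteratively diagonalizing higher-order perturbations inside each cluster via Kato's reduction procedure. The first-order asymptotics $\mu_j(s)-\lambda\sim\ii s\,\tau^2(\psi_j)$ nonetheless remain valid, since the roots of the characteristic polynomial of $(M(s)-\lambda I)/s$---whose coefficients are analytic in $s$---converge to the eigenvalues of $M'(0)/\ii$.
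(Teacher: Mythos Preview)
Your proposal is correct and follows the textbook Rayleigh--Schr\"odinger route: reduce to the $m\times m$ matrix $M(s)$ via Kato's transformation function, diagonalize the first-order term $M'(0)=\ii\,PQP|_{E(\lambda,\cL)}$ to select the orthonormal basis $\{\psi_j\}$, and read off $\mu_j(s)=\lambda+\ii s\,\tau^2(\psi_j)+o(s)$. The paper proceeds differently on two fronts. First, instead of diagonalizing $PQP$, it obtains orthonormality of the limiting vectors by passing to the \emph{self-adjoint} subfamily $\cL(\ii s)=\cL-sQ$: along that ray the eigenspaces are automatically mutually orthogonal, and analytic continuation forces the same grouping of branches at $s=0$. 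This sidesteps the degeneracy issue you flag in your last paragraph without invoking the reduction/iterated-diagonalization machinery. Second, for~\eqref{LimitingEigenvalue} the paper does not expand $\mu_j(s)$ at all; it derives the exact, non-asymptotic identity
\[
\frac{\lambda+\ii s-\mu_i(s)}{\ii s}=\frac{\big((I-Q)\phi_i(s),\psi_i\big)}{\big(\phi_i(s),\psi_i\big)}
\]
by pairing $\cL(s)\phi_i(s)=\mu_i(s)\phi_i(s)$ against $\psi_i$, and then simply lets $s\to 0$. What each approach buys: yours is the canonical perturbation-theory argument and makes the role of $PQP$ transparent; the paper's identity is valid for every $s$, not just asymptotically, and is reused verbatim (with $\psi_i$ replaced by $P\phi(s)$) in the quantitative bound of Theorem~\ref{EncodingTheorem}, so it does double duty.
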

\begin{proof}
	The identification of eigenpairs $(\mu_i(s),\phi_i(s))$ of $\cL(s)$
	converging to $E(\lambda,\cL)$ follows from~\cite[Chapter VII,
	Theorems 1.7, 1.8]{Kato1995}, and we can designate $\psi_i=\phi_i(0)$. 
	
        To see that we can take $\psi_i$ to be orthonormal, note
        from~\cite[Chapter VII, Theorem 3.9, Remark 3.10]{Kato1995}
        that for each $\mu_i(s)$, there is a complex neighborhood
        $U_i$ of $0$ such that $s\mapsto\mu_i(s)$ is holomorphic for
        $s\in U_i$. Since $E(\lambda,\cL)$ is finite-dimensional, all
        $\mu_i(s)$ are holomorphic in $U = \bigcap U_i$. As such, we
        can find a sufficiently small set $U'\subset U$ such that any
        two of $\mu_i(s)$ either (a) agree on all of $U'$ or (b) only agree at $s=0$. Consider the
        self-adjoint perturbations
	\[\cL(\ii\,s) = \cL - s\,Q~,\]
	with associated eigenpairs
        $(\mu_i(\ii\,s),\phi_i(\ii\,s))$. Since $\cL(\ii\,s)$ is
        self-adjoint, any pair of subspaces $E(\mu_i(\ii\,s), \cL(\ii\,s))$
        is either identical or orthogonal for all $s$. 
        As $s\to 0$, however, these must
        converge to the same subspaces as $E(\mu_i(s), \cL(s))$, and $\psi_i$ can be
        constructed as any orthonormal basis of each limiting
        subspace.
	
	Next, for a fixed $s\in\RR$, we calculate
  \begin{align*}
    \mu_i(s)(\phi_i(s),\psi_i)&=(\cL(s)\phi_i(s),\psi_i)\\
                        &=(\cL\phi_i(s),\psi_i)+\ii\,s(Q\phi_i(s),\psi_i)\\
                        &=\lambda(\phi_i(s),\psi_i)+\ii\,s(\phi_i(s),\psi_i)-\ii\,s\,((I-Q)\phi_i(s),\psi_i)~.
  \end{align*}
	Rearranging, it holds that
	\begin{align}\label{BasicIdentity}
		\frac{\lambda+\ii\,s-\mu_i(s)}{\ii\,s}=\frac{((I-Q)\phi_i(s),\psi_i)}{(\phi_i(s),\psi_i)}~,
	\end{align}
	taking $s$ small enough that $(\phi_i(s),\psi_i)\neq 0$. 
 The result~\eqref{LimitingEigenvalue} then follows directly
 from~\eqref{BasicIdentity}.  The final assertion is an immediate
 consequence of~\eqref{LimitingEigenvalue}.
\end{proof}

{In fact, we have the following lower bound on how small $s$ needs to be in order to guarantee}
the eigenvalue closeness
result $\dist(\lambda+\ii\,s,\Spec(\cL(s)))\leq s\delta(\psi)$.

\begin{theorem}\label{EncodingTheorem}
  Let $d=\dist(\lambda,\Spec(\cL)\setminus\{\lambda\})$ and
  $D=\max\{\delta(\psi): \psi\in E(\lambda,\cL)\setminus\{0\}\}$.  If
  $|s|<d/2$, then
  \begin{align*}
    \dist(\lambda+\ii\,s,\Spec(\cL(s)))\leq
    \frac{|s| D^2}{1-2|s|/d}~.
  \end{align*}
 It follows that $\dist(\lambda+\ii\,s,\Spec(\cL(s)))\leq
  s\,D$ when $|s|\leq (1-D)d/2$.
\end{theorem}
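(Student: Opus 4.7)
The plan is to promote the identity
\begin{align*}
\frac{\lambda+\ii\,s-\mu_i(s)}{\ii\,s}=\frac{((I-Q)\phi_i(s),\psi)}{(\phi_i(s),\psi)}
\end{align*}
into a quantitative bound, exploiting two observations about the derivation in Theorem~\ref{KeyTheorem}. First, that derivation only uses $\cL\psi=\lambda\psi$ and the self-adjointness of $\cL$ and $Q$, so the identity in fact holds for \emph{any} $\psi\in E(\lambda,\cL)$ with $(\phi_i(s),\psi)\neq 0$---not only the limiting basis $\psi_i$. Second, for a judicious choice of $\psi$, the component of $\phi_i(s)$ orthogonal to $E(\lambda,\cL)$ can be controlled by a Neumann series on the reduced resolvent of $\cL$. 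Throughout, I will write $P$ for the spectral projection of $\cL$ onto $E(\lambda,\cL)$ and $P^\perp=I-P$.

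The first step is to secure a workable eigenpair $(\mu_i(s),\phi_i(s))$. On the contour $\Gamma=\{|z-\lambda|=d/2\}$ the self-adjoint estimate $\|(\cL-z)^{-1}\|=2/d$ makes the Neumann expansion of $(\cL(s)-z)^{-1}$ converge for $|s|<d/2$, so the Riesz projection $P(s)=-\tfrac{1}{2\pi\ii}\oint_\Gamma(\cL(s)-z)^{-1}\,dz$ is well defined and satisfies $\rank(P(s))=\rank(P)$; hence $\cL(s)$ has exactly $m=\dim E(\lambda,\cL)$ eigenvalues $\mu_i(s)$ (with algebraic multiplicity) inside the open disk $|z-\lambda|<d/2$. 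Moreover, no eigenvector $\phi$ in this disk can satisfy $P\phi=0$: projecting $\cL(s)\phi=\mu\phi$ onto $P^\perp\cH$ and using $\|(\cL-\mu)^{-1}|_{P^\perp\cH}\|<2/d$ would force $\|\phi\|<\|\phi\|$, a contradiction. Fix any such normalized $\phi_i(s)$, set $c(s)=\|P\phi_i(s)\|>0$, $\psi=P\phi_i(s)/c(s)\in E(\lambda,\cL)$, and $w(s)=P^\perp\phi_i(s)$, so that $\phi_i(s)=c(s)\psi+w(s)$.

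The main technical step is to bound $\|w(s)\|$. Applying $P^\perp$ to $\cL(s)\phi_i(s)=\mu_i(s)\phi_i(s)$ and using $[\cL,P]=0$ yields
\begin{align*}
(\cL-\mu_i(s)+\ii\,s\,P^\perp Q)|_{P^\perp\cH}\,w(s)=-\ii\,s\,c(s)\,P^\perp Q\psi~.
\end{align*}
The crucial observation here is that $\psi\in\Ran P$ implies $P^\perp Q\psi=-P^\perp(I-Q)\psi$, and therefore $\|P^\perp Q\psi\|\leq\delta(\psi)$. Combined with the reduced-resolvent bound above, a Neumann inversion---convergent precisely when $|s|<d/2$---gives
\begin{align*}
\|w(s)\|\leq\frac{(2|s|/d)\,c(s)\,\delta(\psi)}{1-2|s|/d}~.
\end{align*}

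To finish, I apply the identity with this $\psi$, use $(\phi_i(s),\psi)=c(s)$ and $(I-Q)\phi_i(s)=c(s)(I-Q)\psi+(I-Q)w(s)$ to obtain
\begin{align*}
|\mu_i(s)-\lambda-\ii\,s|\leq|s|\,\delta(\psi)^2+\frac{|s|\,\delta(\psi)\,\|w(s)\|}{c(s)}~,
\end{align*}
and then substitute the bound on $\|w(s)\|$; since $\|\psi\|=1$ the right-hand side collapses to $|s|\,\delta(\psi)^2/(1-2|s|/d)\leq|s|\,D^2/(1-2|s|/d)$, and the second assertion drops out by routine algebra. The main obstacle is the Neumann-series step: without the identity $P^\perp Q\psi=-P^\perp(I-Q)\psi$---available precisely because $\psi$ was chosen in $\Ran P$---one would only be able to bound $\|P^\perp Q\psi\|\leq\|Q\psi\|\leq 1$, losing one factor of $\delta(\psi)$ and yielding merely $|s|\,D$ rather than $|s|\,D^2/(1-2|s|/d)$.
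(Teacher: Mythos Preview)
Your proof is correct and follows essentially the same route as the paper: choose $\psi=P\phi(s)/\|P\phi(s)\|$, bound $\|(I-P)\phi(s)\|/\|P\phi(s)\|\leq\frac{a\,\delta(\psi)}{1-a}$ with $a=2|s|/d$, and feed this into the identity $(\lambda+\ii s-\mu)/(\ii s)=((I-Q)\phi,\psi)/(\phi,\psi)$. The only cosmetic difference is how the factor $1/(1-a)$ appears---the paper first obtains $\|(I-P)\phi\|\leq a\|(I-Q)\phi\|$ from a direct resolvent identity for the normal operator $(\cL+\ii s)(I-P)-\mu$ and then bootstraps via the triangle inequality $\|(I-Q)\phi\|\leq\|(I-Q)P\phi\|+a\|(I-Q)\phi\|$, whereas you reach the same bound in one step by a Neumann series on $(\cL-\mu+\ii sP^\perp Q)|_{P^\perp\cH}$.
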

\begin{proof}
  By shifting the operator, $\cL\longleftarrow \cL+c$ for some
  $c\in\RR$, if necessary, we may assume that $|\lambda|>d$ without
  loss of generality. 
  Suppose that $|s|<\frac{d}{2}$.  By\cite[Chapter VII, Theorem 2.6,
  Remark 2.9]{Kato1995}, all of the eigenvalues $\mu_i(s)$ of $\cL(s)$
  associated with $\lambda$ lie in the disk $B(\lambda,|s|)$, and no
  other eigenvalues of $\cL(s)$ do.  Choose one such $\mu(s)$ and let
  $\phi(s)$ be an associated eigenvector of $\cL(s)$.  Let $P$ denote
  the spectral projector onto $E(\lambda,\cL)$.  Since $\cL$ is
  self-adjoint, $P$ itself is an orthogonal projection, and
  $P\cL=\cL P$.  Using this commuting property, and the idempotency of
  $P$, we have
\begin{align}\label{EncodingTheoremA}
I-P=((\cL+\ii\,s)(I-P)-\mu(s))^{-1}(I-P) (\cL+\ii\,s-\mu(s))~.
\end{align}
We note that
$\Spec((\cL+\ii\,s)(I-P))=\{0\}\cup\Spec(\cL+\ii\,s)\setminus\{\lambda+\ii\,s\}$,
and our assumptions guarantee that
$\dist(\mu(s), \Spec((\cL+\ii\,s)(I-P)))\geq d/2$, so the resolvent
appearing in~\eqref{EncodingTheoremA}.  is well-defined.  By direct
computation, we see that
\begin{align}\label{EncodingTheoremB}
  (I-P)\phi(s)=\ii\,s ((\cL+\ii\,s)(I-P)-\mu(s))^{-1}(I-P)(I-Q)\phi(s)~,
\end{align}
from which it follows that
\begin{align}\label{EncodingTheoremC}
  \|(I-P)\phi(s)\|\leq \frac{|s|\,\|(I-Q)\phi(s)\|}{\dist(\mu(s),\Spec((\cL+\ii\,s)(I-P)))}\leq a\|(I-Q)\phi(s)\|\quad,\quad a=2|s|/d~.
\end{align}
Since $\|(I-Q)\phi(s)\|\leq
\|(I-Q)P\phi(s)\|+\|(I-Q)(I-P)\phi(s)\|\leq
\|(I-Q)P\phi(s)\|+a\|(I-Q)\phi(s)\|$, we determine that
\begin{align}\label{EncodingTheoremD}
  \|(I-P)\phi(s)\|\leq \frac{a}{1-a}\, \|(I-Q)P\phi(s)\|~.
\end{align}

Now, $P\phi(s)$ is a non-zero element of
$E(\lambda,\cL)$, so the argument establishing~\eqref{BasicIdentity}
for $\psi_i$ applies directly to $P\phi(s)$.  More specifically, we have
\begin{align*}
  \frac{\lambda+\ii\,s-\mu(s)}{\ii\,s}&=\frac{((I-Q)\phi(s),P\phi(s))}{(\phi(s),P\phi(s))}\\
&=\frac{((I-Q)P\phi(s),P\phi(s))+((I-Q)(I-P)\phi(s),P\phi(s))}{(\phi(s),P\phi(s))}\\
&=\frac{\|(I-Q)P\phi(s)\|^2+((I-Q)(I-P)\phi(s),P\phi(s))}{\|P\phi(s)\|^2}\\
&=\delta^2(P\phi(s))+\frac{((I-Q)(I-P)\phi(s),P\phi(s))}{\|P\phi(s)\|^2}~.
\end{align*}
Combining this identity with~\eqref{EncodingTheoremD}, we determine that
\begin{align*}
  \frac{|\lambda+\ii\,s-\mu(s)|}{|s|}&\leq\delta^2(P\phi(s))+\frac{a}{1-a}\,\delta^2(P\phi(s))
                                       =\frac{\delta^2(P\phi(s))}{1-a}\leq \frac{D^2}{1-a}~,
\end{align*}
which is equivalent to the first assertion of the theorem.  The second
assertion of the theorem follows directly from the first, by
determining $a$ for which $D/(1-a)\leq 1$.

We briefly consider the two extreme cases, $D=1$ and
$D=0$.  If $D=0$, this means that $Q\psi=\psi$ for all $\psi\in E(\lambda,\cL)$,
so $\cL(s)\psi=(\lambda+\ii\,s)\psi$, and we have 
$\dist(\lambda+\ii\,s,\Spec(\cL(s)))=0$ regardless of $s$.  If
$D=1$, then we are only considering the case $s=0$, in
which case we also clearly have
$\dist(\lambda+\ii\,s,\Spec(\cL(s)))=0$.
So, for any $D$, the final assertion holds.
\end{proof}


Theorems~\ref{KeyTheorem} and~\ref{EncodingTheorem} concern how to
``encode'' both $\lambda$ and $\delta(\psi)$ for an eigenpair
$(\lambda,\phi)$ of $\cL$ into an eigenvalue of $\cL(s)$ when $s$ is
sufficiently small.  The theorem below can be considered a
complementary ``decoding'' result in the sense that it concerns obtaining
eigenpairs of $\cL$ from eigenpairs of $\cL(s)$.  Although several
aspects of the proof
are very similar to those given in~\cite[Theorem 2.4,
Proposition 2.6]{Ovall2023}, we
include an argument here for completeness.

\begin{theorem}\label{KeyTheorem2}
Let $(\mu,\phi)$ be an eigenpair of $\cL(s)$, and let
$\delta=\delta(\phi)$ and $\tau=\tau(\phi)$.  Then $\Im\mu=s\,\tau^2$ and
we have the eigenvalue closeness bounds
\begin{align}\label{EigenvalueCloseness2}
s\, \delta^2\leq \dist(\mu,\Spec(\cL+\ii\,s))\leq s\,
  \delta\quad,\quad
  \dist(\Re\mu,\Spec(\cL))\leq s\, \delta\, \tau~.
\end{align}
Furthermore, we have residual identity
\begin{align}\label{RealPartResidual1}
\|(\cL-\Re\mu)\phi\|&=s\,\delta\,\tau\, \|\phi\|~.
\end{align}
If $\cL$ is a real operator, then we also have the residual identity
\begin{align} \label{RealPartResidual2}
 \|(\cL-\Re\mu)(\Re\phi)\|^2&=s^2\left(\tau^4\|(I-Q)(\Im
                                           \phi)\|^2+\delta^4\|Q(\Im
                                           \phi)\|^2\right)~.
\end{align}
Finally, let $\lambda=\argmin\{|\sigma-\Re\mu|:\,\sigma\in\Spec(\cL)\}$.
If $\Lambda\subset\Spec(\cL)$ contains $\lambda$, then
\begin{align}\label{EigenvectorCloseness2}
\inf_{v\in
  E(\Lambda,\cL)}\frac{\|\phi-v\|}{\|\phi\|}\leq
  \frac{s\,\delta\,\tau}{\dist(\Re\mu,\Spec(\cL)\setminus(\Lambda\cup\{\Re\mu\}))}~,
\end{align}
where $E(\Lambda,\cL)$ is the invariant subspace of $\cL$ associated
with the eigenvalues $\Lambda$.
\end{theorem}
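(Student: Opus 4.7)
The plan is to extract nearly all of the stated identities from two explicit residual expressions obtained by rearranging the eigenvalue equation $\cL\phi = \mu\phi - \ii s Q\phi$, namely
\[(\cL - \Re\mu)\phi = \ii s\bigl(\tau^2(I-Q)\phi - \delta^2 Q\phi\bigr), \qquad (\cL + \ii s - \mu)\phi = \ii s (I-Q)\phi.\]
For the assertion $\Im\mu = s\tau^2$, I will pair the eigenvalue equation against $\phi$ and use self-adjointness of $\cL$ (so $(\cL\phi,\phi)\in\RR$) together with $(Q\phi,\phi) = \|Q\phi\|^2 = \tau^2\|\phi\|^2$. The residual identity \eqref{RealPartResidual1} then follows by taking the norm of the first expression: orthogonality of $Q\phi$ and $(I-Q)\phi$ combined with $\tau^2+\delta^2 = 1$ collapses everything to $s\delta\tau\|\phi\|$, and the bound $\dist(\Re\mu,\Spec(\cL))\leq s\delta\tau$ in \eqref{EigenvalueCloseness2} is then immediate from self-adjointness. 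For the two-sided bound on $\dist(\mu,\Spec(\cL+\ii s))$, the upper bound drops out of the second residual expression together with normality of $\cL + \ii s$ (whose spectrum is $\Spec(\cL)+\ii s$), while the lower bound is purely geometric: any point of $\Spec(\cL+\ii s)$ has the form $\nu + \ii s$ with $\nu\in\RR$, whereas $\Im\mu = s\tau^2 = s(1-\delta^2)$, forcing $|\mu - (\nu+\ii s)|\geq s\delta^2$.

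The real-part residual \eqref{RealPartResidual2} is the one identity that does not drop out of these two expressions, and it is the place where I expect to exercise the most care. My approach is to combine the hypothesis that $\cL$ is real with Proposition~\ref{ProjectorIsReal} (so $Q$ is real as well) and split the eigenvalue equation $\cL\phi + \ii s Q\phi = \mu\phi$ into its real and imaginary parts using $\phi = \Re\phi + \ii\,\Im\phi$. The real component yields a formula for $(\cL - \Re\mu)\Re\phi$ purely in terms of $\Im\phi$ and $Q\Im\phi$; after substituting $\Im\mu = s\tau^2$ from the previous paragraph, this simplifies to the linear combination $s\bigl(\delta^2 Q\Im\phi - \tau^2 (I-Q)\Im\phi\bigr)$, whose squared norm matches the claimed right-hand side because $Q\Im\phi$ and $(I-Q)\Im\phi$ are orthogonal.

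Finally, for the eigenvector closeness bound \eqref{EigenvectorCloseness2}, I invoke the spectral projector $P_\Lambda$ onto $E(\Lambda,\cL)$, which is orthogonal and commutes with $\cL$ by self-adjointness. Choosing $v = P_\Lambda\phi$ in the infimum reduces the task to estimating $\|(I-P_\Lambda)\phi\|$. On the invariant subspace $\Ran(I-P_\Lambda) = E(\Spec(\cL)\setminus\Lambda,\cL)$ the restriction of $\cL - \Re\mu$ is self-adjoint with spectrum $\Spec(\cL)\setminus\Lambda - \Re\mu$, so it is invertible with norm bounded by $1/\dist(\Re\mu,\Spec(\cL)\setminus(\Lambda\cup\{\Re\mu\}))$ (the $\{\Re\mu\}$ in the denominator is a harmless precaution guarding against any coincidence of $\Re\mu$ with an eigenvalue outside $\Lambda$). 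Combining this with the fact that $I - P_\Lambda$ is a contraction that commutes with $\cL - \Re\mu$ yields
\[\|(I-P_\Lambda)\phi\| \leq \frac{\|(I-P_\Lambda)(\cL-\Re\mu)\phi\|}{\dist(\Re\mu,\Spec(\cL)\setminus(\Lambda\cup\{\Re\mu\}))} \leq \frac{\|(\cL-\Re\mu)\phi\|}{\dist(\Re\mu,\Spec(\cL)\setminus(\Lambda\cup\{\Re\mu\}))},\]
and substituting $\|(\cL-\Re\mu)\phi\| = s\delta\tau\|\phi\|$ from the first paragraph closes the argument.
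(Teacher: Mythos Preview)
Your proposal is correct and follows essentially the same approach as the paper: both derive everything from the two residual identities $(\cL-\Re\mu)\phi=\ii s(\tau^2(I-Q)\phi-\delta^2 Q\phi)$ and $(\cL+\ii s-\mu)\phi=\ii s(I-Q)\phi$, obtain $\Im\mu=s\tau^2$ from the Rayleigh quotient, use normality for the resolvent bounds, split into real and imaginary parts for~\eqref{RealPartResidual2}, and bound $\|(I-P_\Lambda)\phi\|$ via the spectral projector for~\eqref{EigenvectorCloseness2}. Your handling of the last step---working with the restriction of $\cL-\Re\mu$ to $\Ran(I-P_\Lambda)$ rather than with $\cL(I-P_\Lambda)$ on all of $\cH$---is a cosmetic variant that sidesteps the need to check $\Re\mu\neq 0$, but otherwise the arguments are the same.
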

\begin{proof}
  The fact that $\Im\mu = s\tau^2$ follows immediately from the
  Rayleigh quotient $\mu=(\cL(s)\phi,\phi)/(\phi,\phi)$.  Combining
  this with the fact
  that the eigenvalues of $\cL+\ii\,s$ have $s$ as their imaginary
  part establishes lower bound in the first inequality
  of~\eqref{EigenvalueCloseness2}.

  In what follows, it will be convenient to take $\mu_1=\Re\mu$,
  $\mu_2=\Im\mu$, $\phi_1=\Re\phi$ and $\phi_2=\Im\phi$.
  If $\mu \in\Spec(\cL+\ii\,s)$, then $\tau^2=1$, so $\delta=0$, and the
  inequality $\dist(\mu,\Spec(\cL+\ii\,s))\leq s\, \delta$ holds
  trivially.  So we assume that $\mu\notin \Spec(\cL+\ii\,s)$.  Since
 $(\cL+\ii\,s-\mu)\phi=\ii\,s\,(I-Q)\phi$, it follows that $1\leq
 \|(\cL+\ii\,s-\mu)^{-1}\|\,s\delta$.  Noting that $\cL+\ii\,s$ is
 normal, we have
 $\|(\cL+\ii\,s-\mu)^{-1}\|^{-1}=\dist(\mu,\Spec(\cL+\ii\,s))$, which
 establishes the first of the upper bounds in~\eqref{EigenvalueCloseness2}.
  
Simple manipulation of the identity
$(\cL+\ii\,s-\mu)\phi=\ii\,s\,(I-Q)\phi$ reveals that
\begin{align*}
  (\cL-\mu_1)\phi=\ii\,s\tau^2\,(I-Q)\phi-\ii\,s\delta^2\,Q\phi~.
\end{align*}
Taking norms and using the Pythagorean theorem, we see that
\begin{align*}
  \|(\cL-\mu_1)\phi\|^2/\|\phi\|^2&=s^2\left(\tau^4\delta^2+\delta^4\tau^2\right)=s^2\, \delta^2\,\tau^2~,
\end{align*}
which establishes~\eqref{RealPartResidual1}.  Essentially the same
reasoning used to establish the first of the upper bounds
in~\eqref{EigenvalueCloseness2} can be used to establish $\dist(\Re\mu,\Spec(\cL))\leq s\, \delta\, \tau$.
When $\cL$ is a real operator, we also have the identity
\begin{align*}
  (\cL-\mu_1)\phi_1=s\delta^2\,Q\phi_2-s\tau^2\,(I-Q)\phi_2~,
\end{align*}
from which~\eqref{RealPartResidual2} follows.

Now let $P=\frac{1}{2\pi\,\ii}\int_\gamma(z-\cL)^{-1}\,dz$ denote the
(orthogonal) spectral projector for $E(\Lambda,\cL)$, where $\gamma$
is a simple closed contour that encloses $\Lambda\cup\{\mu_1\}$, and
excludes $\Spec(\cL)\setminus\Lambda$.  Noting that $P$ commutes with $\cL$,
the identity
\begin{align}\label{ComplementaryProjectorIdentity2}
I-P=(\cL(I-P)-\mu_1)^{-1}(I-P)(\cL-\mu_1)~,
\end{align}
follows by direct algebraic manipulation.  Since
$\mu_1=(\cL\phi,\phi)/(\phi,\phi)\geq \lambda_{\min}(\cL)>0$, we know
that $\mu_1\not\in\Spec(\cL(I-P))$, so
\begin{align*}
\|(I-P)\phi\|\leq
  \|(\cL(I-P)-\mu_1)^{-1}\|\|(\cL-\mu_1)\phi\|\leq \frac{s\,\delta\,\tau\,\|\phi\|}{\dist(\Re\mu,\Spec(\cL)\setminus(\Lambda\cup\{\Re\mu\}))}~,
\end{align*}
which establishes~\eqref{EigenvectorCloseness2}.  
\end{proof}


\begin{remark}[Canonical Rescaling]\label{EigenvectorRotation}
  In the case of a real operator $\cL$, such as reaction-diffusion
  operators in Example~\ref{ReactionDiffusion}, the corresponding
  eigenvectors can be taken to be real, and the residual
  identity~\eqref{RealPartResidual2} suggests a canonical rescaling of
  $\phi$.  For any non-zero $c\in\CC$, taking $\tilde\phi=c\phi$, we have
  \begin{align*}
    \|(\cL-\Re\mu)(\Re\tilde\phi)\|_{L^2(\Omega)}^2&=s^2\left([\tau(\tilde\phi)]^4\|(I-Q)(\Im
                                           \tilde\phi)\|_{L^2(\Omega)}^2+[\delta(\tilde\phi)]^4\|Q(\Im
                                                     \tilde\phi)\|_{L^2(\Omega)}^2\right)\\
   &=s^2\left([\tau(\phi)]^4\|(I-Q)(\Im
                                           \tilde\phi)\|_{L^2(\Omega)}^2+[\delta(\phi)]^4\|Q(\Im
                                                     \tilde\phi)\|_{L^2(\Omega)}^2\right) ~.
  \end{align*}
  For the second equality, we used the scale-invariance of $\delta$
  and $\tau$.  Determining $c$ that minimizes this (squared) residual
  can be recast as finding the real eigenvector corresponding to the
  smallest eigenvalue of a $2\times 2$ positive semidefinite matrix.
  This determines the canonical rescaling $c$ up to (real) sign.
\end{remark}

\section{Algorithm Template}\label{Algorithm}
We saw in the preceding section that one can extend the theoretical
results of Ovall and Reid~\cite{Ovall2023} to a far wider class of
problems---our Hilbert space $\cH$ need not be a function space on
$\RR^d$, our operator $\cL$ need not be an elliptic differential
operator, and our subspace $W\subset\cH$ need not encode spatial
localization. In this section, we leverage these highly general results to propose a highly general computational algorithm, suitable for all problems of the form (\ref{KeyTask}):
\begin{equation}
	\tag{T}\label{KeyTask2}
	\parbox{\dimexpr\linewidth-4em}{%
		\strut
		Given a Hilbert space $\cH$, a self-adjoint operator $\cL$ on $\cH$ with compact resolvent, a closed subspace $W\subset\cH$, an interval $[a,b]$, and a tolerance $\delta^*>0$, find all eigenpairs $(\lambda,\psi)$ of $\cL$ such that $\lambda\in[a,b]$ and $\dist(\psi,W)/\|\psi\| \leq \delta^*$, or determine that none exist.%
		\strut
	}
\end{equation}

\begin{algorithm}
	\caption{Eigenproblem with Approximate Linear Constraints}\label{ELAT}
	\begin{algorithmic}[1]
		\Procedure{constrained\_eigensolver}{$a,b,s,\delta^*,W$}
		\State find all eigenpairs $(\mu,\phi)$ of $\cL(s)$ with $\dist(\mu,[a,b]+\ii) \leq s\, \delta^*$\label{alg1}
		\Comment{Thm.~\ref{KeyTheorem}}
		\If{no eigenvalues found}
		\State exit
		\Comment{No constrained eigenpairs exist}
		\Else
		\For{each $(\mu,\phi)$ found}
		\State normalize $\phi\longleftarrow \Re(c\phi)$ if desired \label{alg1rot}
		\Comment{Rmk.~\ref{EigenvectorRotation}}
		\State post-process $(\Re\mu,\phi)$ to obtain eigenpair $(\tilde{\lambda},\tilde\psi)$ of $\cL$ \label{alg2}\Comment{Thm.~\ref{KeyTheorem2}}
		\If{$\delta(\tilde\psi)\leq\delta^*$ and $\tilde{\lambda}\in[a,b]$}\label{alg3}
		\State accept $(\tilde\lambda,\tilde\psi)$
		\EndIf
		\EndFor
		\EndIf
		\State \textbf{return} accepted eigenpairs $(\tilde\lambda,\tilde\psi)$
		\EndProcedure
	\end{algorithmic}
\end{algorithm}

This algorithm template is a natural generalization of that presented
in~\cite{Ovall2023}, and it takes the same form.  We highlight that
the template does not constrain the method used to discretize
differential operators such as the reaction-diffusion operator or the
magnetic Schr\"odinger operator in Examples~\ref{ReactionDiffusion}
and~\ref{ReactionDiffusion}, the method used to solve the
(discretized) eigenvalue problem in line~\ref{alg1}, or the method
used for post-processing (if necessary) in line~\ref{alg2}.  Suitable
choices for each may depend on the structure of $\cH$ and $\cL$.   We
comment on this further in Section~\ref{Applications}.

In both~\cite{Ovall2023} and the present contribution, we consider
elliptic differential operators as an important motivating
application, and use a finite element discretization of the eigenvalue
problem for $\cL(s)$ (and for $\cL$).  The finite element space
consists of globally continuous, piecewise polynomial functions of
local degree $p$ on a relatively fine triangulation of the domain
$\Omega$, with triangles having characteristic edge length $h$.
The finite element
software package NGSolve~\cite{Schoeberl2014,Schoeberl2021} is used
for discretization, visualization and several other aspects of the
computation.

For solving the discrete (matrix) eigenvalue problem associated with
line~\ref{alg1}, any number of eigenvalue solvers might be employed,
provided they can efficiently and reliably return eigenpairs whose
eigenvalues are in some specified region of the complex plane---in our
case, the region within $s\delta^*$ of the segment $[a,b]+\ii$.
In~\cite{Ovall2023}, an implementation of FEAST eigensolver that is
integrated with NGSolve~\cite{Gopalakrishnan2017} was used.
FEAST~\cite{Polizzi2009} is a popular member of a broader family of
eigensolvers~\cite{Sakurai2007,Ikegami2010,Beyn2012,Kestyn2016,Huang2018,Yin2019a}
that can be derived from (quadrature) approximations of contour
integrals of the resolvent along the boundary of the region of
interest.  In our experiments, we found it convenient to use the
\texttt{eigs} command of MATLAB, which employs Krylov subspace
methods~\cite{Lehoucq1998,Stewart2001/02} to compute a few eigenpairs , with parameter settings to
select eigenvalues having largest imaginary part.

Theorem~\ref{KeyTheorem2} provides residual identities for approximate
eigenpairs $(\Re\mu,\phi)$ or $(\Re\mu,\Re\phi)$ of $\cL$ that were
computed in the initial phase of the algorithm (line~\ref{alg1}).
If the residual is small enough, there is no need for post-processing
(line~\ref{alg2}), which was the case in our experiments.  In cases
where post-processing is desired, a natural choice is to use
a small number of steps (perhaps only one step) of shifted inverse
iteration on (a discrete version of) $\cL$, with shift $\sigma=\Re\mu$ and initial eigenvector guess
$\phi$ (or $\Re\phi$).  Alternatively, one might employ a Krylov
subspace method that permits initial guesses for the eigenpair of interest.

We highlight several possible applications in the following section,
and illustrate some of them with numerical experiments.

\section{Applications and Numerical Illustrations}\label{Applications}
Ovall and Reid~\cite{Ovall2023} presented an earlier version of our algorithm in the specific case of \emph{position-space localization in} $\RR^d$. That is, given a domain $\Omega\subset\RR^d$, a differential operator $\cL$ on $\Omega$, and a closed subdomain $K\subset\Omega$, they demonstrated how to use the present techniques to identify eigenvectors of $\cL$ approximately supported in $K$.

Our generalized framework allows us to see similar benefits in a more general range of problems. {For one, there are clear analogues of their spatial localization in alternate bases, which are straightforwardly handled by our algorithm: eigenstates with momentum, spin, or angular momentum in a certain range, or more abstractly, eigenfunctions of differential operators with a certain subspace of basis functions. Along with this simple case, we outline a number of possible applications below, with numerical examples for several. Note that this list is far from exhaustive. }

\subsection{Localized Perturbations.}\label{app:perturbations}
Suppose our eigenfunctions of interest are not exactly localized within a given region $K$, but we know how they are supposed to behave outside that region. For instance, we may seek to characterize how a given electronic state of a crystal changes near a local defect, or how the propagation of gravity waves changes over an underwater obstacle. 

Note that localized perturbations are \emph{not} covered by the framework of Ovall and Reid~\cite{Ovall2023}. Their work considers the case where the \emph{full} eigenstate is approximately localized within a given region. Conversely, we here consider localized perturbations away from a known, (generically) non-zero state; this situation might model a defect in a crystal or large molecule, or a moving object on the surface of a body of water.

Suppose that $\cH$ is a space of functions on $\Omega$. Also suppose that we have a known function $u\in \cH$, and that our desired eigenstates satisfy
\begin{equation}\label{eq:localpert}
	\psi|_{\Omega\setminus K}\approx c\, u|_{\Omega\setminus K}~,
      \end{equation}
for some constant $c$.
For instance, if we have a well-understood operator $\mathcal{L}_0$ that agrees with $\mathcal{L}$ outside of $K$, we might choose $u$ to be a fixed eigenmode of $\mathcal{L}_0$; our algorithm would then tell us how $u$ reacts to a local (but potentially large!) perturbation of the system.

To isolate states of the form (\ref{eq:localpert}), we define the subspace
\[W = \{\psi\in\cH:\psi|_{\Omega\setminus K}\in \mathrm{span}\{u|_{\Omega\setminus K}\}\}\]
and the projection
\begin{equation}\label{eq:localpert_proj}
	Q_K:\cH\to W,\;\psi(\mathbf{r})\mapsto \chi_{K}(\mathbf{r})\,\psi(\mathbf{r}) + \chi_{\Omega\setminus K}(\mathbf{r})\,u(\mathbf{r})\frac{\int_{\Omega\setminus K}u^*\psi\,d\mb{r}}{\int_{\Omega\setminus K}u^*u\,d\mb{r}}~.
\end{equation}
Applying our algorithm would reveal all eigenstates of $\cL$ sufficiently close to $W$, or, equivalently, satisfying (\ref{eq:localpert}) with a fixed tolerance.

{The projection \eqref{eq:localpert_proj} is a rank-one update of the projection $\chi_K$ employed in~\cite{Ovall2023}, and it naturally gives rise to
  a rank-one update of the perturbed operator from that work, namely
  \begin{align*}
    \cL(s) = \cL + \ii s\,Q_K = (\cL + \ii s\,\chi_K) + \ii s\,\tilde{Q}_K~,
  \end{align*}
  where $\tilde{Q}_K:=Q_K-\chi_K$.  Upon  finite element discretization, for example, the discrete operator corresponding to $\cL + \ii s\,\chi_K$ is sparse,
  but that corresponding to $\ii s\,\tilde{Q}_K$ is not.  The latter is of rank one, however, so it can be dealt with efficiently.  More specifically, if our 
  eigensolver only requires us to apply the operator $\cL(s)$, then we can carry this procedure out by separately applying the terms $(\cL + \ii s\,\chi_K)$ and $\ii s\,\tilde{Q}_K$.  On the level of discretization, both products entail computational cost that is linear with respect to the size of the discretization.  If our eigenvalue solver requires us to apply the resolvent of $\cL(s)$, we can apply the Sherman--Morrison formula~\cite{Sherman1950} using the same ``sparse plus low-rank'' splitting.}

In general, we may know only that $\psi|_{\Omega\setminus K}$ is close to the span of a finite number of functions $\{u_1,...,u_n\}$. In this case, we only need update (\ref{eq:localpert_proj}) to include terms for each $u_k$, and to replace the Sherman--Morrison formula with the general Woodbury formula~\cite{Woodbury1950} if our eigensolver requires application of the resolvent.

An example of this application is shown in Figure~\ref{fig:square1}. Here, we let $\Omega$ be the square $(-1,1)\times(-1,1)\subset\RR^2$, and we consider the Schrödinger operator $\cL = -\Delta-18\pi^2\chi_{\Omega/2}$ with Neumann boundary conditions. If the second term of the Schrödinger operator---which carves out a small square well in the center of the domain---were missing, we would find the ground state to be $u = \text{const.}$, and we might expect one or more eigenstates of the perturbed operator to maintain this structure outside the well. 

In this problem, we search for eigenstates of $\cL$ that \emph{approximately} match $u$ in the exterior region $\Omega\setminus(\Omega/2)$; to do so, we discretize our domain with a mesh length $h = 0.05$ and a polynomial degree $3$, and we apply our algorithm with $s=0.1$ and a tolerance $\tau^2\geq 0.9$. Out of the first $100$ eigenvectors of $\cL(s)$ (ordered by real part), only $9$ are accepted: $\{\phi_0,...,\phi_7, \phi_{13}\}$. To demonstrate the concept, we show eigenvectors $\phi_{12}$, $\phi_{13}$, and $\phi_{16}$ using two colorbars; the first is a standard colorbar, and the second shows whether the function is greater than, less than, or in between $[-0.25,0.25]$. 

As a baseline, none of the three eigenvectors are well-localized in $\Omega/2$; applying the algorithm of Ovall and Reid~\cite{Ovall2023} gives 
\[\tau^2(\phi_{12},\Omega/2)=0.84\quad,\quad \tau^2(\phi_{13},\Omega/2)=0.47\quad,\quad\tau^2(\phi_{16},\Omega/2)=0.60~,\]
using their notation, all below our tolerance of $0.9$. 

However, we might expect (from the second colorbar) that $\phi_{13}$ is approximately constant in $\Omega\setminus(\Omega/2)$. Indeed, our algorithm gives
\[\tau^2(\phi_{13})=0.96~,\]
above our chosen threshold. The other eigenvalues' $\tau^2$ values are unchanged from before, suggesting that they have vanishing overlap with $u$. Moreover, scaling our eigenvectors as $\phi\longleftarrow c\,\phi$ to minimize the residual (\ref{RealPartResidual2}) yields the values
\begin{gather*}
	\|(\cL-\Re\mu)(\Re\phi_{12})\| = 1.55\times 10^{-5}~,\\
	\|(\cL-\Re\mu)(\Re\phi_{13})\| = 8.21\times 10^{-5}~,\\
	\|(\cL-\Re\mu)(\Re\phi_{16})\| = 1.88\times 10^{-4}~,
\end{gather*}
giving a strong justification for not doing any post-processing at all.

\begin{figure}
	\begin{subfigure}[c]{0.08\textwidth}
		\centering
		\caption*{$ $}
		\includegraphics[scale=0.45]{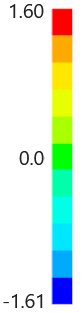}
	\end{subfigure}%
	\begin{subfigure}[c]{0.28\textwidth}
		\centering
		\caption*{$\phi_{12}$: Rejected}
		\includegraphics[scale=0.42]{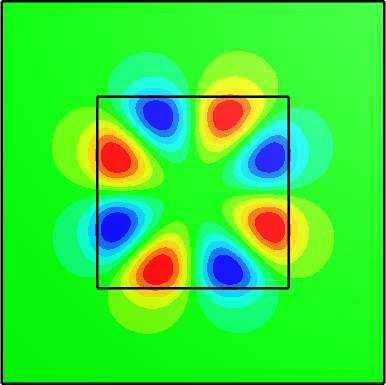}
	\end{subfigure}%
	\begin{subfigure}[c]{0.28\textwidth}
		\centering
		\caption*{$\phi_{13}$: Accepted}
		\includegraphics[scale=0.42]{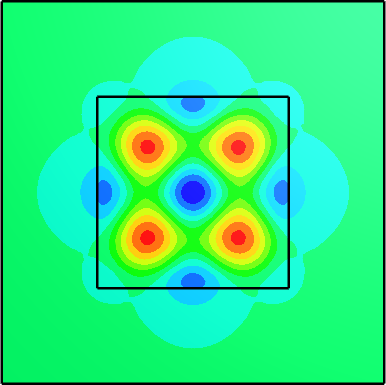}
	\end{subfigure}%
	\begin{subfigure}[c]{0.28\textwidth}
		\centering
		\caption*{$\phi_{16}$: Rejected}
		\includegraphics[scale=0.42]{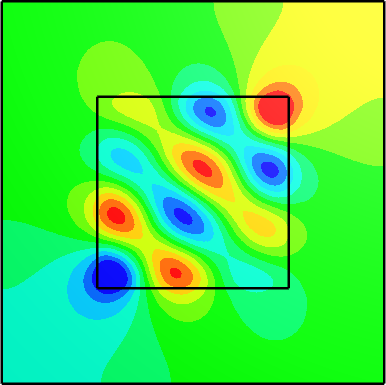}
	\end{subfigure}%
	\begin{subfigure}[c]{0.08\textwidth}
		\centering
		$ $
	\end{subfigure}%
	$ $
	\bigskip
	$ $
	\begin{subfigure}[c]{0.08\textwidth}
		\centering
		\includegraphics[scale=0.45]{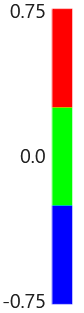}
	\end{subfigure}%
	\begin{subfigure}[c]{0.28\textwidth}
		\centering
		\includegraphics[scale=0.42]{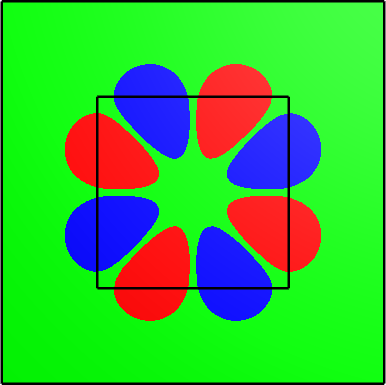}
	\end{subfigure}%
	\begin{subfigure}[c]{0.28\textwidth}
		\centering
		\includegraphics[scale=0.42]{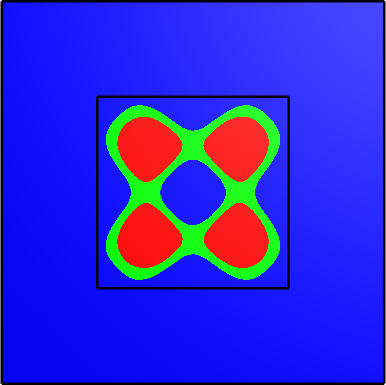}
	\end{subfigure}%
	\begin{subfigure}[c]{0.28\textwidth}
		\centering
		\includegraphics[scale=0.42]{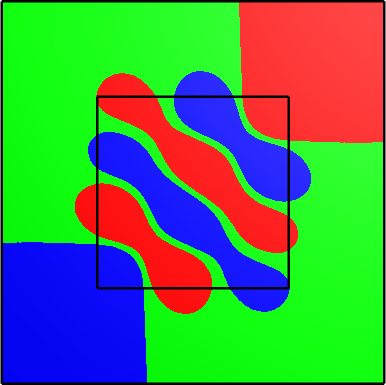}
	\end{subfigure}%
	\begin{subfigure}[c]{0.08\textwidth}
		\centering
		$ $
	\end{subfigure}%
	\caption{Eigenvector response to a strong, localized perturbation, as discussed in Section \ref{app:perturbations}. We show approximate eigenvectors calculated on the domain $\Omega=(-1,1)\times(-1,1)$ for the Schrödinger operator $\cL = -\Delta - 18\pi^2\chi_{\Omega/2}$, with Neumann boundary conditions. Here, we apply our algorithm to align eigenvectors with the subspace $W\subset L^2(\Omega)$ of functions that are constant outside $\Omega/2$. We show eigenvectors $\phi_{12}$, $\phi_{13}$, and $\phi_{16}$ above, using two different colorbars; $\phi_{12}$ and $\phi_{16}$ were rejected by our algorithm, while $\phi_{13}$ was accepted. The second colorbar above highlights the difference; even though $\phi_{13}$ is not localized inside the square well, it is approximately constant outside. Notably, all three eigenvectors are rejected by the localization algorithm of Ovall and Reid~\cite{Ovall2023}.}\label{fig:square1}
\end{figure}

\subsection{Approximate Symmetries.}\label{app:symmetry}
In the discussion of Section \ref{app:perturbations}, we know the approximate profile of our eigenmode in a localized region. However, our framework also extends naturally to partial information of other forms; for instance, if we know the eigenmode \emph{approximately} obeys a certain symmetry. For instance, we may want to identify approximately spherically symmetric electron states surrounding an asymmetric model of the nucleus, approximately translation-invariant electron states in a crystal with impurities, or instabilities in vortices that carry a certain number of approximately-identical nodes.

Fix a locally compact group $\cG$, and suppose it has the property that its Haar measure $dg$ is both left- and right-invariant. This property is known as \emph{unimodularity}, and applies to abelian groups, compact groups, discrete groups, semisimple Lie groups, and others~\cite{Bourbaki2004}.

Further, suppose $\cG$ carries a unitary representation $\cG\to U(\cH)$ on our Hilbert space $\cH$, and define the mean over $\cG$ as follows:
\[Q_\cG:\psi\mapsto\int_\cG g\psi\, dg~,\]
normalizing $\int_\cG dg = 1$.
Since the group action is unitary, the operator $Q_\cG$ is exactly the orthogonal projection onto the space of $\cG$-invariant vectors. Indeed, it is clear that the image of $Q_\cG$ is $\cG$-invariant, since
\[\tilde{g} Q_\cG\psi = \frac{1}{|\cG|}\int_\cG \tilde{g}g\psi\, dg = \frac{1}{|\cG|}\int_\cG \tilde{g}g\psi\, d(\tilde{g}g) = Q_\cG\psi\]
for any $\tilde{g}\in\cG$. We can also see that it is an orthogonal projection, because if $u\in\cH$ is orthogonal to the image of $Q_\cG$, then for any $v\in\cH$, we find
\[\langle v,Q_\cG u\rangle = \frac{1}{|\cG|}\int_\cG\langle v, gu\rangle\,dg = \frac{1}{|\cG|}\int_\cG\langle g^{-1}v, u\rangle\,dg = \frac{1}{|\cG|}\int_\cG\langle g^{-1}v, u\rangle\,dg^{-1} = 0,\]
using the fact that $dg = dg^{-1}$ for unimodular groups~\cite{Pap2002}. 

Thus, we can apply our algorithm to this choice of projection. In doing so, we recover eigenvectors of our operator $\cL$ that are approximately fixed by the action of $\cG$, or---physically---that approximately obey the symmetry quantified by $\cG$.

The numerical implementation of our algorithm depends somewhat on the details of the action $\cG\to U(\cH)$. For instance, suppose $\cH$ is a function space on $\Omega\subset\RR^d$, and $\cG$ is a small, finite group that permutes the points of $\Omega$. In this case, the projection
\[Q_\cG = \int_\cG g\;dg\]
is sparse, as it couples each point in $\Omega$ to only $O(|\cG|)$ others; we can thus apply and invert the total operator $\cL$ as usual. 

If $\cG$ is large, however, the projection may no longer be sparse. For instance, suppose $\cG = SO(d)$ acts on $\Omega\subset\RR^d$ by rotations; then $Q_\cG$ is the projection onto the set of radial functions, which is low-rank. We can then apply or invert the total operator by first splitting $\cL(s)$ into its sparse component $\cL$ and its low-rank component $Q_\cG$, as discussed for localized perturbations and wave scattering.

\begin{figure}
	\begin{subfigure}[c]{0.08\textwidth}
		\centering
		\caption*{$ $}
		\includegraphics[scale=0.45]{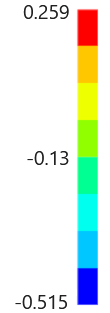}
	\end{subfigure}%
	\begin{subfigure}[c]{0.28\textwidth}
		\centering
		\caption*{$\phi_{13}$: Accepted}
		\includegraphics[scale=0.42]{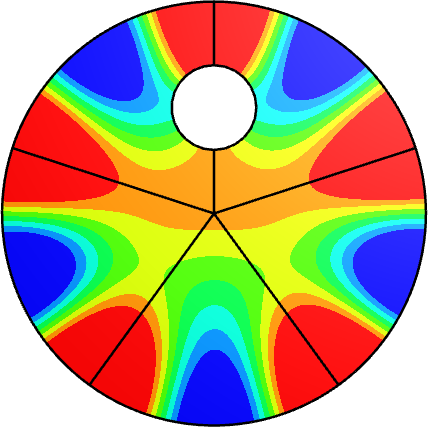}
	\end{subfigure}%
	\begin{subfigure}[c]{0.28\textwidth}
		\centering
		\caption*{$\phi_{32}$: Accepted}
		\includegraphics[scale=0.42]{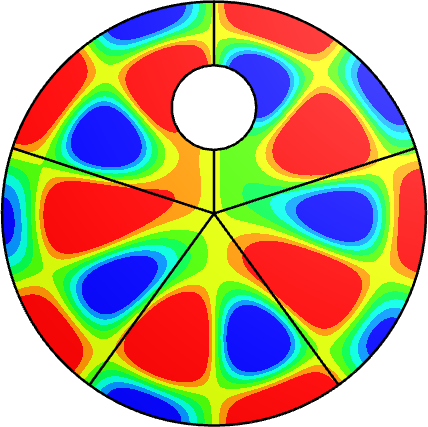}
	\end{subfigure}%
	\begin{subfigure}[c]{0.28\textwidth}
		\centering
		\caption*{$\phi_{41}$: Accepted}
		\includegraphics[scale=0.42]{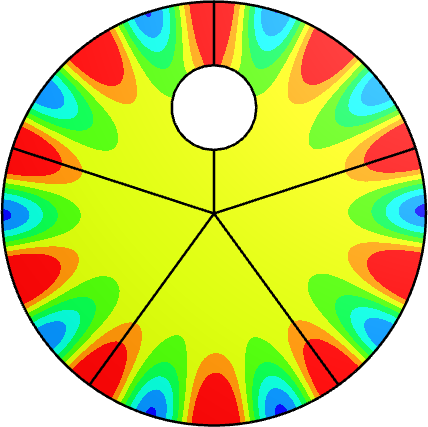}
	\end{subfigure}%
	\begin{subfigure}[c]{0.08\textwidth}
		\centering
		$ $
	\end{subfigure}%
	$ $
	\bigskip
	$ $
	\begin{subfigure}[c]{0.08\textwidth}
		\centering
		\caption*{$ $}
		\includegraphics[scale=0.45]{Figs/eig_test2/cbar1_new.png}
	\end{subfigure}%
	\begin{subfigure}[c]{0.28\textwidth}
		\centering
		\caption*{$\phi_{16}$: Rejected}
		\includegraphics[scale=0.42]{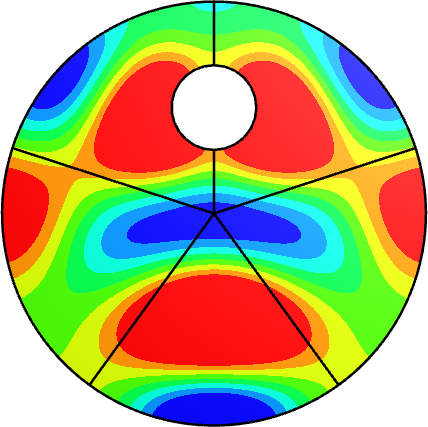}
	\end{subfigure}%
	\begin{subfigure}[c]{0.28\textwidth}
		\centering
		\caption*{$\phi_{33}$: Rejected}
		\includegraphics[scale=0.42]{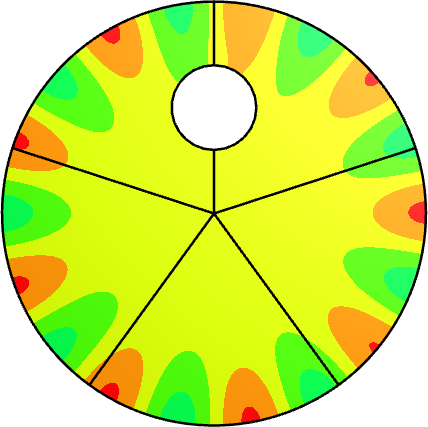}
	\end{subfigure}%
	\begin{subfigure}[c]{0.28\textwidth}
		\centering
		\caption*{$\phi_{52}$: Rejected}
		\includegraphics[scale=0.42]{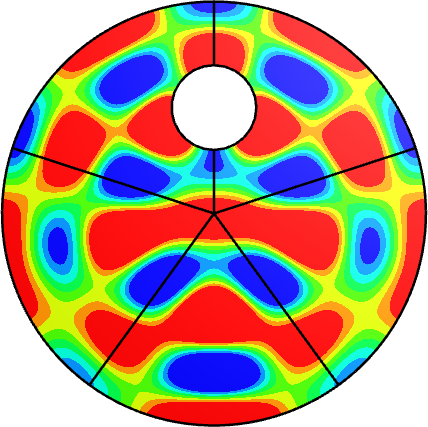}
	\end{subfigure}%
	\begin{subfigure}[c]{0.08\textwidth}
		\centering
		$ $
	\end{subfigure}%
	\caption{Eigenvectors with an approximate symmetry, as discussed in Section \ref{app:symmetry}. We show approximate eigenvectors of the Laplacian calculated on the domain $\Omega=B_1(0,0)\setminus B_{1/5}(0,1/2)$, with Neumann boundary conditions. Here, we apply our algorithm to align eigenvectors with the subspace $W\subset L^2(\Omega)$ of functions with five-point rotational symmetry. The top three eigenvectors, $\phi_{13}$, $\phi_{32}$, and $\phi_{41}$, are accepted by our algorithm, while the bottom three, $\phi_{16}$, $\phi_{33}$, and $\phi_{52}$, are rejected. Of the bottom three eigenvectors, only $\phi_{52}$ shows a trace of five-point symmetry, and this symmetry breaks down near the deleted region $B_{1/5}(0,1/2)$.}\label{fig:yen1}
\end{figure}

An example of this application is shown in Figure~\ref{fig:yen1}.  We write $B_r(x,y)\subset\RR^2$ for the ball of radius $r$ about a point $(x,y)\in\RR^2$. In this problem, we consider the eigenproblem for the Laplacian $\cL = -\Delta$ on the domain $\Omega = B_1(0,0)\setminus B_{1/5}(0,1/2)$, with Neumann boundary conditions. {While eigenvectors on the disk $B_1(0,0)$ take a familiar form, as products of radial Bessel functions and angular sinusoids, the deleted region near the top of the disk complicates this story.}

We apply our algorithm to find eigenvectors with an (approximate) five-point rotational symmetry, discretizing our domain with a mesh length $h = 0.05$ and a polynomial degree $3$, and using $s=0.1$ and a tolerance $\tau^2\geq 0.9$.  We approximate the subspace $W\subset L^2(\Omega)$ of functions with five-point rotational symmetry using Zernike functions~\cite{Zernike1934,Niu2022},
$Z^m_n(r,\theta)=R^{|m|}_n(r)e^{\ii\,m \theta}$, up to degree $n=15$. Concretely, we extend any function $f$ on $\Omega$ to a function $\tilde{f}$ on the unit disk, as $\tilde{f} = f\,\chi_\Omega$. Zernike functions are (a) exact eigenfunctions of the projector $Q:\cH\to W$ and (b) an orthogonal basis of $L^2$ on the unit disk, so we can exactly represent $W$ as a span of appropriate Zernike functions. For a more practical implementation, we recommend projecting onto $W$ by averaging a function over its orbit under the point group $C_5$.

Of the first $100$ eigenvectors of $\cL(s)$ (ordered by real part), only six are accepted: $\{\phi_0,\phi_{12},\phi_{13},\phi_{32},\phi_{40},\phi_{41}\}$. We show six example eigenvectors of $\cL(s)$. The top three, $\phi_{13}$, $\phi_{32}$, and $\phi_{41}$, show a clear (though imperfect) five-point symmetry. As such, they are accepted by our algorithm, with 
\[\tau^2(\phi_{13})=0.98\quad,\quad \tau^2(\phi_{32})=0.90\quad,\quad\tau^2(\phi_{41})=1.00~.\]
The bottom three eigenvectors, $\phi_{16}$, $\phi_{33}$, and $\phi_{52}$, are rejected by our algorithm, with
\[\tau^2(\phi_{16})=0.32\quad,\quad \tau^2(\phi_{33})=0.00\quad,\quad\tau^2(\phi_{52})=0.72~.\]
These are rejected for three different reasons. The eigenvector $\phi_{16}$ carries a nonzero $\tau^2$ value because of its nonzero \emph{constant} component, but it is dominated by a three-point symmetry orthogonal to $W$. The eigenvector $\phi_{33}$ carries a strong eighteen-point symmetry, so is (approximately) exactly orthogonal to $W$. The final eigenvector, $\phi_{52}$, carries several artifacts of a five-point symmetry, but it is dominated by fine structure at the scale of the missing region $B_{1/5}(0,1/2)$; its symmetry is thus strongly broken near that region, and it falls under our threshold of $0.9$.

We report the residuals of each eigenvector below, normalizing as discussed in Remark~\ref{EigenvectorRotation}:
\begin{gather*}
	\|(\cL-\Re\mu)(\Re\phi_{13})\| = 1.17\times 10^{-4}~,\\
	\|(\cL-\Re\mu)(\Re\phi_{16})\| = 3.58\times 10^{-4}~,\\
	\|(\cL-\Re\mu)(\Re\phi_{32})\| = 1.61\times 10^{-4}~,\\
	\|(\cL-\Re\mu)(\Re\phi_{33})\| = 6.92\times 10^{-5}~,\\
	\|(\cL-\Re\mu)(\Re\phi_{41})\| = 2.85\times 10^{-4}~,\\
	\|(\cL-\Re\mu)(\Re\phi_{52})\| = 1.08\times 10^{-4}~.
\end{gather*}
Notably, the residuals left by accepted eigenvectors are not significantly larger or smaller than rejected eigenvectors; both sets represent legitimate (approximate) eigenvectors of $\cL$, with the only difference being their distance to $W\subset\cH$.

\subsection{Localization in Alternate Bases}\label{app:localization}
Abstractly, we can view the position-space localization problem as a localization ``along'' the position operator 
\[\mathbf{Q}:\psi(\mathbf{q})\mapsto \mathbf{q}\,\psi(\mathbf{q})~,\]
defined element-wise by pairwise-commuting maps
\[Q_i:\psi(q_1,...,q_d)\mapsto q_i\,\psi(q_1,...,q_d)~.\]
If $\phi$ is a shared eigenvector of all $Q_i$ with corresponding eigenvalues $q_i$, we say that $(\mathbf{q},\phi)=((q_1,...,q_d),\phi)$ is an eigenpair of $\mathbf{Q}$. This notation is widely used in quantum mechanics---e.g., see Griffiths' text~\cite{Griffiths2017}.

If an eigenvector $\psi$ of $\cL$ is localized in a region $K\subset\Omega\subset\RR^d$, we can equivalently think of it as supported in the subspace
\[W=\operatorname{span}\{\phi\,|\,(\mathbf{q},\phi)\,\text{is an eigenpair of}\,\mathbf{Q}\,\text{with}\,\mathbf{q}\in K\}~.\]
Of course, if we are truly interested in the function space $L^2(\Omega)$, this construction is purely heuristic (though widely used); such eigenmodes exist only in a space of distributions, and $\psi$ would be constructed as an integral over an uncountable number of them.

More rigorous constructions can be carried out in alternate bases, however. In general, suppose we have an (unbounded) operator
\[\mathbf{B}:\Dom(\mathbf{B})\to\cH^d~,\]
for instance, constructed from a collection of pairwise commuting elements 
\[B_i:\Dom(B_i)\to\cH\quad,\quad B_i\in\{B_1,...,B_d\}~,\]
and suppose we fix a region $K\subset\CC^d$ to localize about. We construct the following subspace of $W$:
\[W=\operatorname{span}\{\phi\,|\,(\mathbf{b},\phi)\,\text{is an eigenpair of}\,\mathbf{B}\,\text{with}\,\mathbf{b}\in K\}~.\]
In general, if $\mathbf{B}$ does not commute with $\cL$, one cannot expect eigenvectors of the latter to lie exactly in $W$. However, we can apply our algorithm to identify eigenvectors that lie \emph{near} to $W$.

\begin{figure}
	\begin{subfigure}[c]{0.08\textwidth}
		\centering
		\caption*{$ $}
		\includegraphics[scale=0.45]{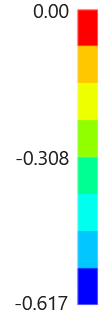}
	\end{subfigure}%
	\begin{subfigure}[c]{0.28\textwidth}
		\centering
		\caption*{$\phi_{8}$: Accepted}
		\includegraphics[scale=0.42]{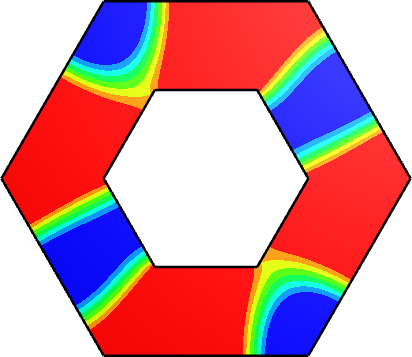}
	\end{subfigure}%
	\begin{subfigure}[c]{0.28\textwidth}
		\centering
		\caption*{$\phi_{9}$: Rejected}
		\includegraphics[scale=0.42]{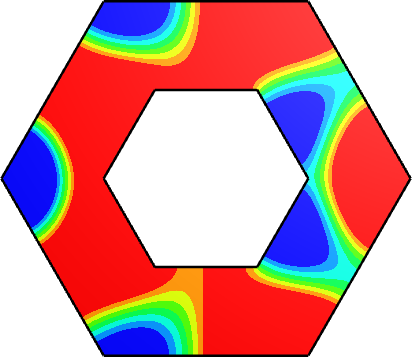}
	\end{subfigure}%
	\begin{subfigure}[c]{0.28\textwidth}
		\centering
		\caption*{$\phi_{10}$: Rejected}
		\includegraphics[scale=0.42]{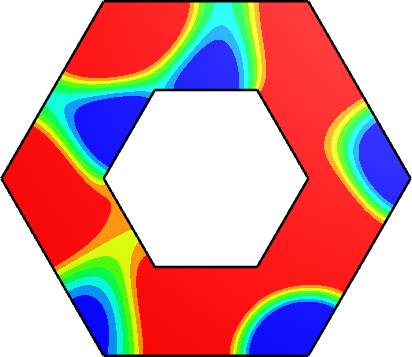}
	\end{subfigure}%
	\begin{subfigure}[c]{0.08\textwidth}
		\centering
		$ $
	\end{subfigure}%
	$ $
	\bigskip
	$ $
	\begin{subfigure}[c]{0.08\textwidth}
		\centering
		\includegraphics[scale=0.45]{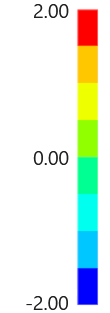}
	\end{subfigure}%
	\begin{subfigure}[c]{0.28\textwidth}
		\centering
		\includegraphics[scale=0.42]{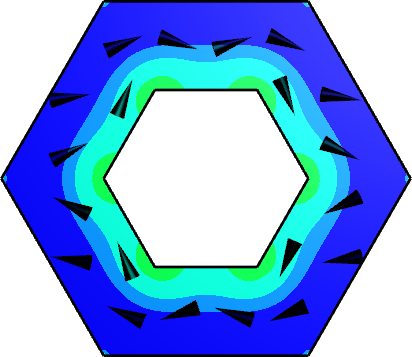}
	\end{subfigure}%
	\begin{subfigure}[c]{0.28\textwidth}
		\centering
		\includegraphics[scale=0.42]{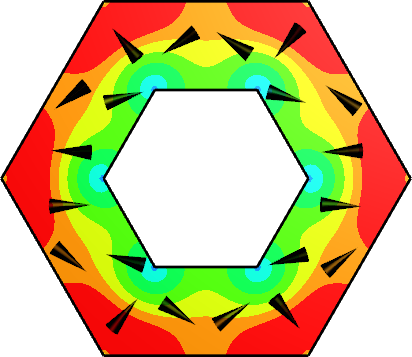}
	\end{subfigure}%
	\begin{subfigure}[c]{0.28\textwidth}
		\centering
		\includegraphics[scale=0.42]{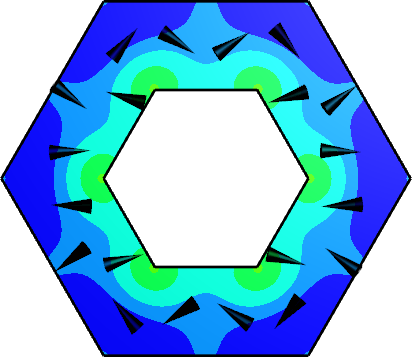}
	\end{subfigure}%
	\begin{subfigure}[c]{0.08\textwidth}
		\centering
		$ $
	\end{subfigure}%
	\caption{Localization in a basis of angular momentum states, as discussed in Section \ref{app:localization}. Specifically, we show approximate eigenvectors of the Laplacian on the hexagonal annulus shown, with Neumann boundary conditions; we apply our algorithm to align eigenvectors with the subspace $W\subset L^2(\Omega)$ of functions with strictly negative (i.e., clockwise) angular momentum. We show eigenvectors $\psi_{8}$, $\psi_{9}$, and $\psi_{10}$, along with their corresponding angular momenta---the arrowheads on the latter plots indicate direction of probability current. Only the first eigenvector is accepted by our algorithm, reflecting its uniformly negative angular momentum.}\label{fig:hex1}
\end{figure}

An example of this application is given in Figure \ref{fig:hex1}. Let $U$ be the regular hexagon inscribed in the unit circle, and define the hexagonal annulus $\Omega=U\setminus(U/2)$. We consider the Laplace operator $\cL=-\Delta$ on $\Omega$ with Neumann boundary conditions, and we search for eigenmodes of strictly negative angular momentum. More concretely, if 
\[\mathbf{B} = L_z = -\ii \left(x\partial_y - y\partial_x\right) = -\ii\,\partial_\theta\]
is the angular momentum operator in 2D, we define $W$ to be the span of eigenstates of $L_z$ of eigenvalue $ \leq-1$. 

We discretize the domain $\Omega$ with a mesh length $h = 0.05$ and a polynomial degree $3$. As discussed in Section \ref{app:symmetry}, we approximate the subspace $W\subset L^2(\Omega)$ of negative-momentum functions using Zernike functions, but now up to degree $n=8$. 
We apply our algorithm with $s=0.1$ and a tolerance $\tau^2\geq 0.55$ for accepted eigenstates. The lower tolerance in this experiment, as compared to the threshold $\tau^2\geq 0.9$ chosen in previous examples, reflects our approximation of $W$ using Zernike functions; because any function on $\Omega$ is truly interpreted as a function on the disk with \emph{support} in $\Omega$, its components along (low-dimensional) Zernike functions are artificially reduced. For instance, even the constant function $f(\mathbf{q})=1$ on $\Omega$ only has a component $\operatorname{vol}(\Omega)/\pi\approx0.62$ along the direction of the first Zernike function, $Z_0^0=1$. We use the latter value to scale our usual tolerance as $0.62\times0.9 \approx 0.55$.


Of the first $100$ eigenvectors of $\cL(s)$, only four are accepted: $\{\phi_{2},\phi_4,\phi_8,\phi_{15}\}$. Higher-energy eigenvectors \emph{may} be of negative angular momentum, but simply not be well-approximated by our restricted basis of $n\leq 8$ Zernike functions. We show three example eigenstates of $\cL(s)$, $\phi_8$, $\phi_9$, and $\phi_{10}$, using two plots each, in Figure~\ref{fig:hex1}. The upper row shows their real component $\Re\phi$, and the lower row shows their local angular momentum $\phi^*L_z\phi$, with arrowheads indicating the direction of the local probability current
\[\mathbf{J} = \Im\left(\phi^*\nabla\phi\right)~.\]
For reference, if $\rho = |\phi|^2$ is the Born probability density associated with a wavefunction $\phi$, then the pair $(\rho,\mathbf{J})$ satisfies the continuity equation
\[\partial_t\rho + \nabla\cdot\mathbf{J} = 0~,\]
prompting the definition of $\mathbf{J}$ as a quantum ``probability current''~\cite{Griffiths2017}.

The shown eigenstates carry $\tau^2$ values
\[\tau^2(\phi_{8})=0.79\quad,\quad \tau^2(\phi_{9})=0.37\quad,\quad\tau^2(\phi_{10})=0.47~.\]
Of these eigenstates, only $\phi_8$ is accepted by our algorithm; indeed, we can see visually that its angular momentum is strongly negative over most of the domain, and its probability current always travels clockwise. The remaining two eigenstates are related to one another by a reflection, both containing regions of non-negative angular momentum (and counterclockwise-facing $\mathbf{J}$) that push them away from $W$.

For this first experiment, we report residuals \emph{without} taking real parts of the approximate eigenvectors---i.e., using (\ref{RealPartResidual1}) rather than (\ref{RealPartResidual2}). This choice is made such that our selected solutions remain approximate eigenvectors of $L_z$; since the latter is not a real operator (in the sense that $\Re(L_zv)\neq L_z(\Re v)$), this property is incompatible with the scaling of Remark~\ref{EigenvectorRotation}. The corresponding residuals are
\begin{gather*}
	\|(\cL-\Re\mu)(\phi_{8})\| = 0.041~,\\
	\|(\cL-\Re\mu)(\phi_{9})\| = 0.048~,\\
	\|(\cL-\Re\mu)(\phi_{10})\| = 0.050~.
\end{gather*}

\subsection{Wave Scattering.}\label{app:scattering} 
A particularly important case of localized perturbations is given by \emph{scattering problems}, where we seek to identify how a plane wave scatters off of a localized obstacle or potential. Scattering problems have far-reaching applications in acoustics, electromagnetism, quantum chemistry, and fluid dynamics~\cite{Colton2013,Mandal2000}. For instance, Rutherford scattering off of atomic nuclei can be used as a spectroscopic technique, to diagnose the chemical composition of a material. Similarly, scattering of acoustic and radio waves is often used to determine the positions and velocities of distant objects---these processes gives rise to \emph{echolocation} and \emph{radiolocation} (radar), respectively.

Consider a potential $V\in L^\infty(\RR^d)$ well-localized about the origin, so that, say, $V$ falls off no slower than a Coulomb potential. Often just as important as understanding the \emph{bound} states of $V$ (which could be enumerated efficiently with traditional algorithms) is understanding its \emph{scattering amplitudes}; if a plane wave $\psi_i(\mathbf{r}) = e^{\ii\,\mathbf{k}\cdot\mathbf{r}}$ strikes the potential $V$, it generically gives rise to a spherical outgoing wave
\[\psi_s(\mathbf{r}) = f(\xi)\,r^{(1-d)/2}e^{\ii\,kr}~,\]
where $r = \|\mathbf{r}\|$, $k = \|\mathbf{k}\|$, and $\xi = \mathbf{r}/r\in S^{d-1}$. Here, $f(\xi)$ is the \emph{scattering amplitude}, which gives rise to the more-familiar \emph{(differential) cross section}:
\[\frac{d\sigma}{d\Omega}(\xi) = |f(\xi)|^2\quad,\quad \sigma = \int_{S^{d-1}}|f(\xi)|^2\,d\xi~.\]


We can apply our present work to efficiently find the form of $\psi_s$, and thus the differential cross section. Namely, our scattering state corresponds to a stationary solution $\psi$ of our wave equation with the following far-field (i.e., $r\to\infty$) behavior:
\begin{equation}\label{eq:scattering_dist}
	\psi(\mathbf{r}) \approx \psi_i(\mathbf{r}) + \psi_s(\mathbf{r}) = e^{\ii\,\mathbf{k}\cdot\mathbf{r}} + f(\xi)\,r^{(1-d)/2}e^{\ii\,kr}~.
\end{equation}
Suppose we carve out a sphere of radius $R$ about the origin, select an orthogonal basis $\{f_j\}_{j\geq 1}$ of $L^2(S^{d-1})$, and form the functions
\[u_0 = e^{\ii\,\mathbf{k}\cdot\mathbf{r}}\quad,\quad u_j(\mathbf{r}) = f_j(\xi)\,r^{(1-d)/2}e^{\ii\,kr}~.\]
Following our earlier analysis, then, we define the projection
\begin{equation}\label{eq:scattering_proj}
	Q_R:\psi(\mathbf{r})\mapsto \chi_{r<R}(\mathbf{r})\,\psi(\mathbf{r}) + \chi_{r\geq R}(\mathbf{r})\sum\nolimits_{j\leq N} u_j(\mathbf{r})\frac{\int_{r>R}u_j^*\psi}{\int_{r>R}u_j^*u_j}~,
\end{equation}
where the sum is taken over $u_j$ up to a chosen threshold $N = N(k)$. The projection $Q_R$ maps functions to the space of ``perfect'' scattering states, or those which take exactly the form (\ref{eq:scattering_dist}) for $r>R$. Applying our algorithm to the wave operator $\cL$ with the projection $Q_R$, we thus recover the eigenstates of $\cL$ that approximately fit this form.

If we fix a constant threshold $N$ in (\ref{eq:scattering_proj}), the corresponding operator $Q_R - \chi_{r<R}$ ends up being low-rank after a spatial discretization, and we can implement an eigensolver for $Q_R$ as discussed in Section \ref{app:perturbations}. If no such threshold is chosen, the operator $Q_R - \chi_{r<R}$ is still relatively low-rank, and the same technique can be applied. For instance, if we expand our Hilbert space in a basis of spherical polynomials, we expect $O(n^d)$ basis elements in total, for a chosen degree $n\geq 1$, but only $O(n^{d-1})$ basis elements $f_j$ for the space of functions on the sphere $S^{d-1}$. Since the scattering states $u_j$ are parameterized by these basis elements, the resulting matrix is of rank $O(n^{d-1})$. 

As a note, the same procedure can be applied to scattering off of sharp obstacles. If our obstacle occupies a region $K\subset\RR^d$, we simply choose a radius $R\gg\operatorname{diam}(K)$ and use the same projection as before.

To be clear, the proposed method does not directly improve upon existing, specialized algorithms for scattering problems. In particular, scattering problems are often handled using integral equation methods~\cite{ColtonIntegral2013}, which do not need to discretize the full $d$-dimensional space in the first place. However, the most efficient methods are often specialized to particular dimensions or geometries~\cite{Hyde2005,Ambikasaran2015,Gillman2015,Hoskins2020}. By contrast, our approach may offer benefits because of its flexibility; for instance, it applies equally well to any geometry and dimension count, and it can be used to recover a large number of scattering states simultaneously, with minimal additional computational cost. This flexibility may be particularly useful for scattering off of anisotropic potentials---if we append appropriate plane waves to the sum in (\ref{eq:scattering_proj}), we can collect results for all incoming angles simultaneously. 



\subsection{Spatial Localization on a Graph.}\label{app:graph}
In a different direction, note that our generalized framework allows us to study even spatial localization in a wider variety of cases. Specifically, if we take $\cH$ to be a self-adjoint operator---either the Laplacian or a Schrödinger operator, for instance---on a graph, we can apply our algorithm to identify eigenvectors that are approximately localized on any subgraph. For instance, this allows us to study Andersen localization in a more general setting. This case closely mirrors the Euclidean case of Ovall and Reid~\cite{Ovall2023}, so we do not discuss it further here.

\subsection{Counterexamples to a Known Pattern.}\label{app:counterexample}
Finally, we may have a sense of how \emph{most} eigenvectors of $\cL$ behave, at least in a given interval of eigenvalues, and we may be interested to see if any examples subvert this pattern. For instance, electronic states of large molecules are often modeled as superpositions of atomic orbitals, centered on each component atom~\cite{Ching2012}. Given a certain molecule, we may wish to determine whether any low-energy states are \emph{not} closely approximated by this subspace.

For this, suppose we have a fixed, closed subspace $W\subset\cH$ that we wish to avoid, and form the projection $Q:\cH\to W$ as usual. When we apply our algorithm as usual, any eigenvectors $\psi$ of $\cL$ within a distance $\dist(\psi,W)/\|\psi\| < \delta^*$ correspond to eigenpairs $(\mu,\phi)$ of $\cL(s)$ with $\Im\mu \geq s(1-\delta^*)$, in the sense of Theorems \ref{KeyTheorem}, \ref{EncodingTheorem}, and \ref{KeyTheorem2}. To avoid these, we simply search the \emph{remaining} space $\Im\mu < s(1-\delta^*)$.

As a note, one could formulate this as an application of our \emph{unmodified} algorithm by replacing $W$ by $W^\perp$, $Q$ by $I-Q$, and $\delta^*$ by $1-\delta^*$.


\section{Concluding Remarks}\label{Conclusions}
We have extended the theoretical and numerical results of Ovall and Reid~\cite{Ovall2023} to a large class of constrained eigenproblems. Moreover, we have proposed a number practical applications of the described algorithm---ranging from capturing approximate symmetries to solving localization problems on discrete graphs---and provided numerical implementations supporting several of these example applications. In future work, we intend to explore the numerical behavior of our full algorithm, and to extend our analysis to a wider class of eigenproblems. We believe that the present work provides a powerful approach to solve eigenproblems under approximate, linear constraints, and that it might support a wide range of questions in scientific computing.

The work of J.O. was partially supported by the NSF grant DMS 2208056 ``Computational Tools for Exploring Eigenvector Localization''.  This contribution was motivated by discussions between the authors initiated at the 2024 Simons Collaboration on the Localization of Waves Annual Meeting.

\bibliographystyle{amsalpha}
\bibliography{titles}

\end{document}